\documentclass[letterpaper, 10 pt, conference]{ieeeconf}  

\IEEEoverridecommandlockouts                              
\usepackage{amsfonts}
\usepackage{graphicx}
\usepackage[dvipsnames]{xcolor}
\usepackage{amsmath}
\usepackage{amssymb}
\usepackage{dsfont}

\usepackage{ifthen}
\usepackage{color}
\usepackage{cite}

\usepackage{mathtools}
\usepackage{booktabs}
\usepackage{url}
\usepackage{hyperref}
\usepackage[caption=false,font=footnotesize]{subfig}
\usepackage{float}
\usepackage{multirow}
\usepackage{comment}
\usepackage{tikz}
\usepackage{pgfplots}
\pgfplotsset{compat=1.12}
\usepackage{algorithm}
\usepackage{algpseudocode}

\newcommand{\R}{{\rm I\!R}}

\newtheorem{lemma}{Lemma}

\newtheorem{definition}{Definition}
\newtheorem{proposition}{Proposition}

\def\qed{ \rule{.08in}{.08in}}

\newcommand{\bestdelta}[1]{\textbf{\textcolor{red}{#1}}}
\newcommand{\bestwin}[1]{\textbf{\textcolor{blue}{#1}}}
\newcommand{\besttime}[1]{\textbf{\textcolor{red}{#1}}}

\title{\LARGE \bf Toward Maximizing Algebraic Connectivity via Density-Dependent Backbone Construction and Reinforcement Learning Completion
}

\author{Zeru Zhu \hspace{.3in} Marco Gamarra \hspace{.3in} Ji Liu
\thanks{Z. Zhu is with the Department of Applied Mathematics and Statistics at Stony Brook University.
M. Gamarra is with the Air Force Research Laboratory.
J.~Liu is with the Department of Electrical and Computer Engineering at Stony Brook University.
Email addresses: 
\href{mailto:zeru.zhu@stonybrook.edu}{zeru.zhu@stonybrook.edu},
\href{mailto:marco.gamarra@us.af.mil}{marco.gamarra@us.af.mil},
\href{mailto:ji.liu@stonybrook.edu}{ji.liu@stonybrook.edu}.
}
}

\begin{document}

\maketitle
\thispagestyle{empty}
\pagestyle{empty}

\begin{abstract}
This paper studies an open network design problem of constructing graph topologies with maximum algebraic connectivity under fixed numbers of vertices and edges. A density-dependent approach is proposed that decomposes the design task into two stages: a high-quality, low-cost structural backbone construction and a reinforcement learning completion stage. The method selects the backbone based on density, using complete multipartite graphs in dense regimes and Cayley graphs in low-to-intermediate regimes, and then refines the remaining edges through a learning-based procedure. Numerical results show that the proposed method achieves high algebraic connectivity with favorable runtime scalability, outperforming existing methods. These results support a structural-family-first strategy for scalable network design.
\end{abstract}

\section{Introduction}

Over the past two decades, consensus has been extensively studied and has achieved notable success \cite{vicsekmodel,reza1,ReBe05}, finding applications in a broad range of distributed control and computation problems \cite{survey,nedic2009distributed,tacle,split}.
Consensus processes are fundamental to almost all cooperative multi-agent algorithms and, in many cases, significantly influence or even determine their convergence rate.

A typical continuous-time linear consensus process among $n$ agents over a connected simple graph $G$ can be modeled by a linear differential equation of the form $\dot x(t)=-Lx(t)$, where $x(t)$ is the state vector in $\R^n$ and $L$ is the ``Laplacian matrix'' of $G$. 
The $i$th entry of $x(t)$ represents the state of agent $i$. 
The multi-agent system is said to reach a consensus if all $x_i(t)$ are equal. 
For a simple graph $G$ with $n$ vertices, its Laplacian matrix is defined as $L=D-A$, where $D$ and $A$ denote its degree matrix and adjacency matrix, respectively. 
Here, $D$ is an $n\times n$ diagonal matrix whose $i$th diagonal entry equals the degree of vertex $i$, and $A$ is an $n\times n$ matrix whose $ij$th entry equals 1 if $(i,j)$ is an edge in $G$ and otherwise equals 0. 
It is well known that $L$ is positive semidefinite with smallest eigenvalue $0$, and its second smallest eigenvalue, called the algebraic connectivity of $G$, 
is positive if and only if $G$ is connected \cite{Fiedler73}. 
It has been shown that $\dot x(t)=-Lx(t)$ will reach a consensus if and only if $G$ is connected. 
More importantly, the (worst-case) convergence rate of consensus is characterized by the algebraic connectivity, with larger values leading to faster consensus \cite{reza1}. 

Motivated by the above discussion, a natural and fundamental problem is to design network topologies that achieve fast, or sometimes fastest, consensus. This problem has been extensively studied for a long time, among others \cite{moura,ming,bayen,dai,dong,ogiwara}. Despite these efforts, the following question remains largely open: Given fixed numbers of vertices and edges, which graphs maximize the algebraic connectivity?  
To date, this question has only been theoretically answered in certain special cases \cite{ogiwara,Shahbaz23}.
Beyond the challenges in theoretical analysis, the question is also computationally intractable in numerical experiments, especially for large-scale graphs. 
The related edge-augmentation problem is NP-hard \cite{MOSKAOYAMA2008677}.
Even though exhaustive combinatorial search is conceptually possible, its super-exponential growth makes it computationally impractical beyond very small graph sizes. The structural characterization of optimal graphs that maximize algebraic connectivity therefore remains poorly understood, let alone the associated computational complexity.

\subsection{Related Work and Benchmark Methods}

Due to the theoretical and computational hurdles, existing literature includes practical heuristics (typically without guarantees) and exact formulations (typically with limited scalability), such as MISDP-based approaches.
Constructing graphs from scratch under fixed vertex/edge budgets has received less attention: \cite{kurahashi2024depth} is restricted to special regular graphs, 
and \cite{somisetty2025optimal} studies a weighted MISDP formulation with outer-approximation procedures.

Most existing works investigate a closely related augmentation problem: given a connected graph, how to add a fixed number of edges to maximally increase algebraic connectivity. This problem is NP-hard \cite{MOSKAOYAMA2008677}. Prior work includes SDP relaxations for Boolean edge-addition formulations \cite{ghosh2006growing}, as well as MISDP formulations \cite{somisetty2024spectral}. A number of heuristic and approximation methods have also been proposed. Notable among them are Fiedler vector based greedy methods \cite{ghosh2006growing,yin2024accelerating}, secular equation (for the Laplacian under a rank-one perturbation) guided edge selection \cite{kim2010bisection}, tabu search \cite{wei2014algebraic}, and Frank-Wolfe optimization with Fiedler vector gradients \cite{9981584}. In principle, these methods can be extended to construct a graph from scratch via different strategies, e.g., applying them iteratively.


These methods are computationally expensive and do not scale well to large graph construction.
We benchmark representative methods in \cite{ghosh2006growing,8412478,wei2014algebraic,somisetty2024spectral,9981584,somisetty2025optimal} for medium-sized graphs from scratch; results indicate that Fiedler vector based greedy (FVG) \cite{ghosh2006growing} gives the best practical quality-runtime trade-off among full-construction baselines (Section~\ref{subsec:benchmark}), so we adopt it as the primary benchmark. We do not benchmark \cite{yin2024accelerating} since it is a decentralized implementation of FVG under a different problem setting. We also exclude \cite{kim2010bisection} in the main benchmark because it is an exact $\Delta \lambda_2$ greedy algorithm; in our setting it has higher cost and does not improve the quality-runtime trade-off relative to FVG.

For a graph, any eigenvector corresponding to the second smallest eigenvalue of its Laplacian matrix is called a Fiedler vector of the graph \cite{Fiedler73}. We apply the Fiedler vector based greedy method to construct a graph from scratch by iteratively adding edges that maximize the increase in algebraic connectivity. Specifically, at each iteration, the increase caused by adding an edge $(i,j)$ is approximated via first-order eigenvalue perturbation as $(v_i - v_j)^2$, where $v$ is a selected Fiedler vector of the current graph; if $\lambda_2$ has multiplicity greater than one, $v$ is chosen from the corresponding eigenspace.

\subsection{Problem Formulation}

For a connected simple graph \(G=(V,E)\) with \(|V|=n\) and \(|E|=m\),
its Laplacian eigenvalues are ordered as \(0=\lambda_1(L(G))\le \lambda_2(L(G))\le \cdots\le \lambda_n(L(G))\).
Here \(\lambda_2(L(G))\) is the algebraic connectivity. Let \(\mathcal G_n\) denote the set of all connected simple graphs with $n$ vertices. 
For fixed \((n,m)\), define \(\mathcal G_{n,m}=\{G\in\mathcal G_n: |E|=m\}\).
The network design problem of interest is
\begin{equation}
\max_{G\in\mathcal G_{n,m}}\ \lambda_2(L(G)).
\label{eq:main_opt}
\end{equation}
We refer to \eqref{eq:main_opt} as algebraic connectivity maximization.
Given the combinatorial search space, the goal is to construct a graph with algebraic connectivity as large as possible for fixed \(n\) and \(m\).

\subsection{Contributions}
This paper proposes a graph theory guided reinforcement learning approach to the algebraic connectivity maximization problem. The approach decomposes the network design into two stages. First, we develop a density-dependent, high-quality, low-cost backbone stage for fixed-\((n,m)\) algebraic-connectivity maximization, using complete multipartite and Cayley graph families with routing and fallback (cf. Section~\ref{sec:backbone}).
Second, we formulate completion as a residual edge-allocation problem from the selected backbone and use reinforcement learning (RL) to optimize only this sequential residual stage (cf. Section \ref{sec:RL}).
Together, this backbone-plus-completion decomposition gives the best quality/runtime scaling among tested baselines as \(n\) increases, outperforms all existing methods, and thus supports a structural-family-first viewpoint for scalable algebraic-connectivity maximization (cf. Section \ref{sec:simulation}).

\section{Density-Dependent Backbone Construction} \label{sec:backbone}

Most existing methods in related work assume a given initial graph and then optimize edge additions.
Our approach treats initialization as a design variable that strongly affects both remaining-edge search difficulty and final graph quality. Meanwhile, simulations show that graphs with high algebraic connectivity exhibit distinct structures depending on graph density. 
This motivates a density-dependent structural initialization that chooses the initial graph family according to edge density.
To parameterize edge budgets uniformly across different \(n\), we use the connected-range density
\begin{equation}
\rho
=
\frac{m-(n-1)}{\binom{n}{2}-(n-1)},
\;\;\; {\rm with} \;\;\;
n-1\le m\le \binom{n}{2}.
\label{eq:rho_connected}
\end{equation}
It is easy to see that $\rho\in[0,1]$. 
\(\rho=0\) corresponds to the minimum connectivity budget (\(m=n-1\)) and \(\rho=1\) corresponds to the complete graph.
For each target \((n,m)\), we first construct a connected graph \(G_0=(V,E_0)\) with \(|E_0|=m_0\le m\), aiming for large \(\lambda_2(G_0)\) relative to its edge budget.
The remaining \(m-m_0\) edges are added in a later completion stage.
We refer to this initialized graph \(G_0\) as a \emph{backbone}.
Section II-A proposes the envelope backbone, Section II-B proposes the Cayley backbone, Section II-C presents an algorithm for selecting between the two backbones based on density (density routing), and Section II-D analyzes the algorithm complexity and scalability.

\subsection{Envelope Backbone via Complete Multipartite Graphs}
For a fixed node count \(n\), each graph in \(\mathcal G_n\) corresponds to a discrete point in the \((\rho,\lambda_2)\) plane.
Since \(\lambda_2\) is monotone nondecreasing under edge addition, increasing edge density does not reduce the best achievable \(\lambda_2\);
hence, across the density spectrum, any maximizer of \eqref{eq:main_opt} for some feasible \(m\) must lie on the weak upper-left staircase, i.e., weak-maximal points under the order ``smaller \(\rho\), larger \(\lambda_2\).'' See Fig.~\ref{fig:staircase_random_n32}. We formalize this notion as follows.

\begin{figure}[t]
\centering
\includegraphics[width=0.95\linewidth]{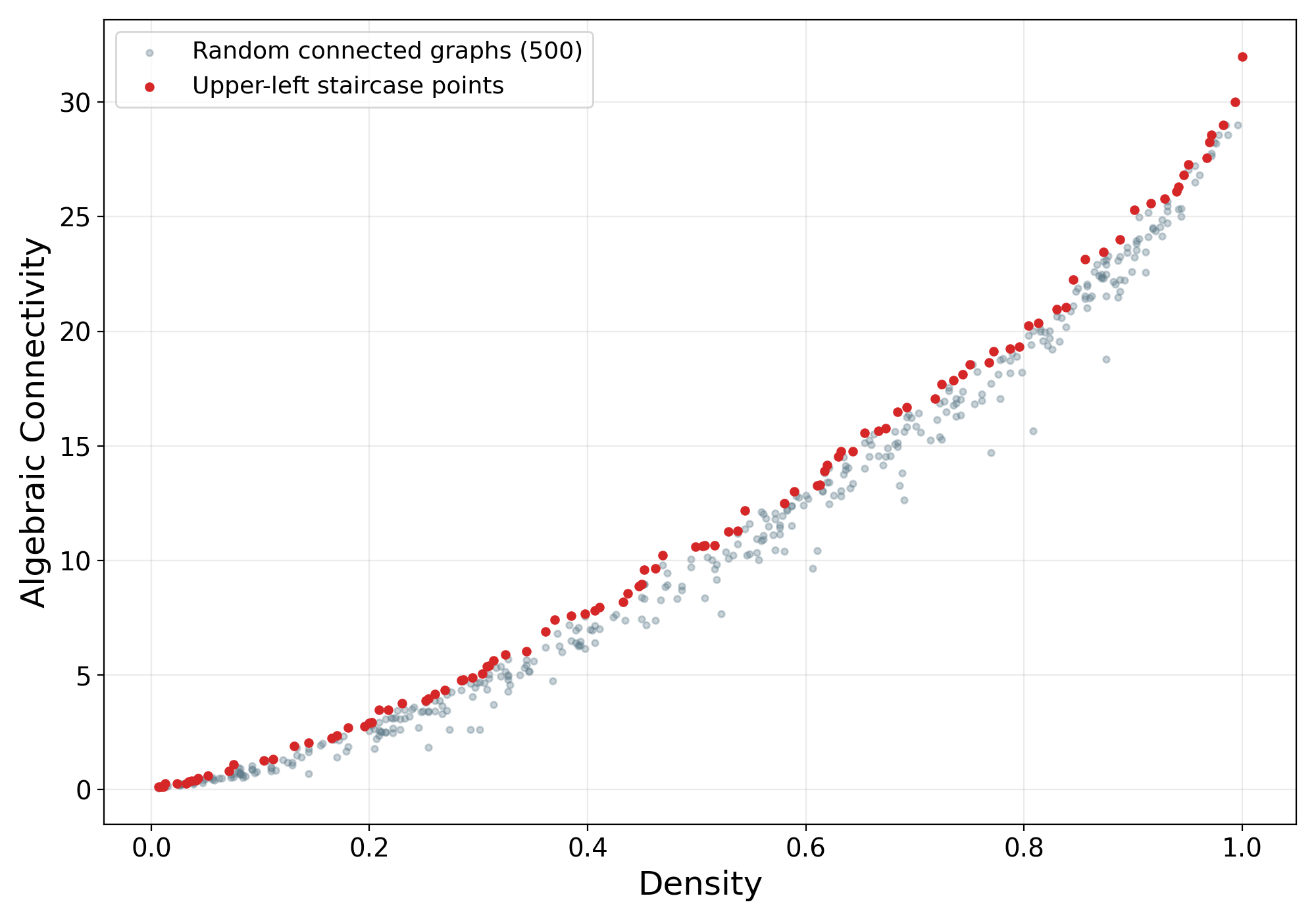}
\caption{Random connected graphs (\(n=32\)); highlighted points are weak-maximal staircase points in \((\rho,\lambda_2)\).}
\label{fig:staircase_random_n32}
\end{figure}

\begin{definition}
Let \(\mathcal P\subset\mathbb R^2\) be a finite point set, with points written as \(p=(x_p,y_p)\).
For \(p,q\in\mathcal P\), we say that \(q\) \emph{weakly dominates} \(p\) if \(x_q\le x_p\) and \(y_q>y_p\).
We say that \(q\) \emph{strictly dominates} \(p\) if \(x_q\le x_p\), \(y_q\ge y_p\), and at least one inequality is strict.
A point \(p\) is \emph{weak-maximal} (resp. \emph{strict-maximal}) if no point in \(\mathcal P\) weakly dominates (resp. strictly dominates) \(p\).
\end{definition}

In our setting, \(x=\rho\) and \(y=\lambda_2\), and \(\mathcal P\) is the set of graph-induced points at fixed \(n\). In this convention, weak dominance is strict in \(\lambda_2\) and non-strict in \(\rho\): weak-maximality characterizes where optimizers of \eqref{eq:main_opt} can lie, while strict-maximality keeps only the left boundary among points with equal \(\lambda_2\).
Characterizing these maximal sets of \(\mathcal G_n\) directly is generally intractable.
We therefore study them within a structured subfamily where edge count and \(\lambda_2\) admit closed-form expressions.
We next introduce the complete multipartite family \(\mathcal K_n\), which is known to contain graphs with strong algebraic connectivity \cite{ogiwara,Shahbaz23}.
An illustrative subset of \(\mathcal K_n\) in the \((\rho,\lambda_2)\) plane is shown in Fig.~\ref{fig:envelope_staircase}.

\begin{definition}
Let \(r\in\{2,\dots,n\}\) and \(1\le n_1\le \cdots \le n_r\) with \(\sum_{i=1}^r n_i=n\).
The complete multipartite graph \(K_{n_1,\dots,n_r}\) has all edges between different parts and no edges within any part.
Denote by \(\mathcal K_n\subseteq\mathcal G_n\) the family of all such complete multipartite graphs on \(n\) vertices.
\end{definition}

It is known that when all parts have equal size (i.e., \(r\) divides \(n\)), this graph is a maximizer of \(\lambda_2\) for the corresponding edge budget \cite{ishii2016extensions}.
We refer to this case as a balanced complete multipartite graph.

The first ingredient is a closed-form edge-count identity.

\begin{lemma}\label{lem:cmp_edge_count}
For \(G=K_{n_1,\dots,n_r}\),
\begin{equation}
m(G)=\sum_{1\le i<j\le r}n_i n_j
=\frac{1}{2}\bigg(n^2-\sum_{i=1}^r n_i^2\bigg).
\label{eq:cmp_edge}
\end{equation}
\end{lemma}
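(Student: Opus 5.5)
The plan is a direct double-counting argument in two steps: first the combinatorial identity $m(G)=\sum_{i<j}n_in_j$, then the algebraic simplification.

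\emph{First equality.} By the definition of $K_{n_1,\dots,n_r}$, the vertex set is partitioned into parts $V_1,\dots,V_r$ with $|V_i|=n_i$. Two distinct vertices are adjacent exactly when they lie in different parts. Each edge therefore has its endpoints in a unique unordered pair of parts $\{i,j\}$ with $i<j$. Conversely, every pair $(u,v)\in V_i\times V_j$ with $i<j$ gives exactly one edge. Summing the $n_in_j$ such pairs over all $i<j$ counts each edge exactly once, since the graph is simple and the parts are disjoint. This gives $m(G)=\sum_{1\le i<j\le r}n_in_j$.

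\emph{Second equality.} Expand the square of the total vertex count:
\[
n^2=\Big(\sum_{i=1}^r n_i\Big)^2=\sum_{i=1}^r n_i^2+2\sum_{1\le i<j\le r}n_in_j .
\]
Rearranging gives $\sum_{i<j}n_in_j=\tfrac12\big(n^2-\sum_i n_i^2\big)$.

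As a cross-check, the same result also follows by complementation. The complement of $K_{n_1,\dots,n_r}$ is the disjoint union of cliques $K_{n_i}$, so
\[
m(G)=\binom n2-\sum_{i=1}^r\binom{n_i}2=\tfrac12\Big(n^2-\sum_{i=1}^r n_i^2\Big),
\]
using $\sum_i n_i=n$.

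There is no real obstacle here. The only point needing care is the bijection between edges and cross-part vertex pairs. This relies on disjointness of the parts and on the absence of intra-part edges, both of which are built into the definition.
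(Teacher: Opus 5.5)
Your proof is correct and follows the paper's own argument: count edges as cross-part vertex pairs to get $\sum_{i<j} n_i n_j$, then expand $\big(\sum_i n_i\big)^2$ and use $\sum_i n_i = n$. The complementation cross-check via $\binom{n}{2}-\sum_i \binom{n_i}{2}$ is also valid, but it is not needed.
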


\vspace{.12in}

{\em Proof:}
Edges appear only across different parts, so \(m(G)=\sum_{1\le i<j\le r} n_i n_j\).
Using \(\left(\sum_{i=1}^r n_i\right)^2=\sum_{i=1}^r n_i^2+2\sum_{1\le i<j\le r} n_i n_j\) and \(\sum_i n_i=n\), we obtain \eqref{eq:cmp_edge}.
\hfill$\qed$

Hence edge minimization at fixed \(n\) is equivalent to maximizing \(\sum_i n_i^2\).

The second ingredient is a closed-form expression for \(\lambda_2\) in this family.

\begin{lemma}\label{lem:cmp_lambda2_formula}
Let \(G=K_{n_1,\dots,n_r}\) with \(2\le r\le n-1\) and \(n_1\le\cdots\le n_r\).
Then
\begin{equation}
\lambda_2(G)=n-n_r.
\label{eq:cmp_lambda2}
\end{equation}
For \(r=n\), \(G=K_n\) and \(\lambda_2(G)=n\).
\end{lemma}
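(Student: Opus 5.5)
I would compute the full Laplacian spectrum of \(G=K_{n_1,\dots,n_r}\) explicitly by working with the complement. The complement \(\overline G\) is the disjoint union of cliques \(K_{n_1},\dots,K_{n_r}\). I would then use the standard complement identity: for any graph on \(n\) vertices, \(L(G)+L(\overline G)=nI-J\), where \(J\) is the all-ones matrix. Since \(L(\overline G)\) commutes with \(J\) and annihilates \(\mathbf 1\), every eigenvector of \(L(\overline G)\) orthogonal to \(\mathbf 1\) with eigenvalue \(\mu\) is an eigenvector of \(L(G)\) with eigenvalue \(n-\mu\). So the nonzero spectrum of \(L(G)\) is \(\{n-\mu_i(\overline G)\}\), where \(\mu_i\) ranges over the \(n-1\) eigenvalues of \(L(\overline G)\) on \(\mathbf 1^\perp\).

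Next I would write down the spectrum of \(L(\overline G)\). Each block \(K_{n_i}\) has Laplacian eigenvalues \(0\) once and \(n_i\) with multiplicity \(n_i-1\). Hence \(L(\overline G)\) has eigenvalue \(0\) with multiplicity \(r\), spanned by the part indicators, and eigenvalue \(n_i\) with multiplicity \(n_i-1\) for each \(i\).

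Restricting to \(\mathbf 1^\perp\) removes one copy of \(0\). The eigenvalues on \(\mathbf 1^\perp\) are therefore \(0\) with multiplicity \(r-1\) (indicator combinations summing to zero) together with the \(n_i\)'s. It follows that \(L(G)\) has spectrum
\[
0^{(1)},\quad n^{(r-1)},\quad (n-n_i)^{(n_i-1)},\ i=1,\dots,r .
\]
As a check, these multiplicities sum to \(1+(r-1)+(n-r)=n\).

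With the spectrum in hand, the formula follows by locating the second smallest eigenvalue. When \(r\le n-1\), some part has \(n_i\ge 2\). In particular \(n_r\ge2\), so the value \(n-n_r\) actually occurs, with multiplicity \(n_r-1\ge1\). It is the smallest nonzero value, because \(n-n_i\ge n-n_r\) for all \(i\) and \(n-n_r<n\). It is also positive, because \(r\ge2\) gives \(n_r\le n-1\). Hence \(\lambda_2=n-n_r\). When \(r=n\), all \(n_i=1\), the terms \((n-n_i)\) appear with multiplicity \(0\), and the spectrum is \(0,n,\dots,n\), so \(\lambda_2(K_n)=n\).

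The only delicate step is the complement identity and the bookkeeping on \(\mathbf 1^\perp\), in particular making sure exactly one zero eigenvalue of \(L(\overline G)\) is discarded. I would handle this by choosing an orthogonal eigenbasis of \(L(\overline G)\) that contains \(\mathbf 1/\sqrt n\). Alternatively, I would verify the eigenvectors directly: vectors supported on one part \(V_i\) with zero sum are eigenvectors of \(L(G)\) with eigenvalue \(n-n_i\), and vectors constant on each part with zero total sum (weighted by \(n_i\)) are eigenvectors with eigenvalue \(n\). Either way the argument is routine.
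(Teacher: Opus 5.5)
Your proof is correct. The route differs from the paper's mainly in that yours is self-contained: the paper gives no argument of its own and only points to Proposition~3.2 of \cite{klee_stamps_2022_rank1}. That result supplies the Laplacian spectrum $0,\ n^{(r-1)},\ (n-n_i)^{(n_i-1)}$ of $K_{n_1,\dots,n_r}$, from which $\lambda_2$ is read off.

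You derive the same spectrum directly, in two ways. The first uses $L(G)+L(\overline G)=nI-J$ with $\overline G=K_{n_1}\sqcup\cdots\sqcup K_{n_r}$. This is close in spirit to the cited rank-one-perturbation framework, since $J$ has rank one. The second exhibits the eigenvectors explicitly: zero-sum vectors supported on one part give eigenvalue $n-n_i$, and zero-sum part-constant vectors give eigenvalue $n$. Your restriction to $\mathbf 1^\perp$ and the multiplicity count $1+(r-1)+(n-r)=n$ are handled correctly.

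Your version makes explicit what the citation leaves to the reader. The hypothesis $r\le n-1$ is needed precisely so that $n_r\ge 2$, which guarantees the value $n-n_r$ actually occurs, with multiplicity $n_r-1\ge 1$. The hypothesis $r\ge 2$ keeps that value positive, and $n-n_i\ge n-n_r$ makes it the smallest nonzero eigenvalue. When $r=n$, the candidate $n-n_r=n-1$ has multiplicity zero, which is exactly why $K_n$ has to be split off. The paper's citation buys brevity; your derivation buys a complete, checkable argument for the case split in the statement.
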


{\em Proof:}
The lemma is a direct consequence of Proposition 3.2 in \cite{klee_stamps_2022_rank1}.
\hfill$\qed$

This shows that, within \(\mathcal K_n\), fixing \(\lambda_2=k\) is equivalent to fixing the largest part size as \(n-k\).

We now define the envelope point at a fixed \(\lambda_2\)-level.

\begin{definition}
Fix \(k\in\{1,\dots,n-2\}\).  
Among all \(G\in\mathcal K_n\) with \(\lambda_2(G)=k\), a graph with minimum edge count is called an envelope point at level \(k\).
\end{definition}

By definition, these are exactly strict-maximal points of \(\mathcal K_n\) in the \((\rho,\lambda_2)\) plane.
The next proposition gives an explicit construction and edge formula.

\begin{proposition}[Envelope Point at Level \(k\)]
\label{prop:env_formula}
Fix \(k\in\{1,\dots,n-2\}\), and write
\begin{equation}
n=q(n-k)+b,\qquad 0\le b<n-k.
\label{eq:env_division}
\end{equation}
An envelope point is obtained by taking \(q\) parts of size \(n-k\), and, if \(b>0\), one additional part of size \(b\).
Its edge count is
\begin{equation}
m_{\mathrm{env}}(k)=\frac{1}{2}\left(n^2-q(n-k)^2-b^2\right).
\label{eq:env_mk}
\end{equation}
\end{proposition}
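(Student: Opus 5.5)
By Lemma~\ref{lem:cmp_lambda2_formula}, fixing $\lambda_2(G)=k$ with $k\le n-2$ means the graph has $r\le n-1$ parts and largest part size exactly $n_r=n-k\ge 2$. By Lemma~\ref{lem:cmp_edge_count}, minimizing $m(G)$ is then the same as maximizing $\sum_i n_i^2$. So the plan is to solve the integer problem
\[
\max \sum_{i=1}^r n_i^2 \quad \text{s.t. } \sum_i n_i=n,\ 1\le n_i\le s,\ \max_i n_i=s,
\]
where $s:=n-k$. First observe that the constraint $\max_i n_i=s$ can be relaxed to $n_i\le s$. Indeed, the proposed configuration, $q$ parts of size $s$ plus one part of size $b$ if $b>0$, has $q\ge 1$ because $s\le n$. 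So it attains the maximum part size $s$, and it suffices to show it is optimal among all partitions with parts bounded by $s$. We should also check that it has $r\ge 2$ parts: $r=1$ would require $n=s$, i.e. $k=0$, which is excluded. Consequently the construction lies in $\mathcal K_n$ with $\lambda_2=k$.

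The core step is an exchange argument using convexity of $x\mapsto x^2$. Suppose a partition with all parts $\le s$ has two parts $n_i\le n_j$ satisfying $1\le n_i$ and $n_j<s$. Replace them by $n_i-1$ and $n_j+1$, deleting the part if it becomes $0$. This keeps the sum fixed and the bound $\le s$, and it changes $\sum n_i^2$ by
\[
(n_j+1)^2+(n_i-1)^2-n_j^2-n_i^2=2(n_j-n_i)+2>0.
\]
The objective strictly increases, and it is bounded over finitely many partitions, so the process terminates. At a terminal partition, at most one part lies strictly below $s$. Then $n=q's+b'$ with $0\le b'<s$, and uniqueness of Euclidean division forces $q'=q$ and $b'=b$. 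Every maximizer must be terminal, since otherwise a move would strictly improve it. Hence the maximizer is exactly the stated configuration, unique up to ordering, which proves that it is an envelope point.

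Finally, substituting $\sum_i n_i^2=qs^2+b^2$ into \eqref{eq:cmp_edge} gives \eqref{eq:env_mk}. The only subtle point is the bookkeeping around the feasibility conditions: that $r\ge 2$ and $r\le n-1$, so that Lemma~\ref{lem:cmp_lambda2_formula} applies, and that the exchange moves never leave the feasible set. Once these are checked, the proof reduces to this convexity exchange argument.
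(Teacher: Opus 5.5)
Your proof is correct and follows the same route as the paper's proof. Both reduce via Lemmas~\ref{lem:cmp_edge_count}--\ref{lem:cmp_lambda2_formula} to maximizing $\sum_i n_i^2$ with parts bounded by $n-k$, use the exchange $(n_i,n_j)\mapsto(n_i-1,n_j+1)$ to force at most one part below $n-k$, and finish by Euclidean division and substitution into \eqref{eq:cmp_edge}. Your extra bookkeeping makes explicit what the paper leaves implicit: relaxing $\max_i n_i=s$ to $n_i\le s$, checking $q\ge 1$ and $2\le r\le n-1$ for the construction (so the level-$k$ set is nonempty), and noting that the maximizer is unique.
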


\vspace{.12in}

{\em Proof:}
By Lemma~\ref{lem:cmp_lambda2_formula}, fixing \(\lambda_2=k\) is equivalent to fixing the largest part size as \(n_r=n-k\).
Thus feasible partitions satisfy \(n_i\le n-k\) and \(\max_i n_i=n-k\).
By Lemma~\ref{lem:cmp_edge_count}, minimizing \(m(G)\) is equivalent to maximizing \(\sum_i n_i^2\).

In any maximizer of \(\sum_i n_i^2\) under these constraints, at most one part size is strictly smaller than \(n-k\).
Indeed, suppose there are two indices \(i\neq j\) with \(1\le n_i\le n_j<n-k\). Replacing \((n_i,n_j)\mapsto(n_i-1,n_j+1)\), where a zero part (if \(n_i=1\)) is discarded, keeps feasibility and strictly increases \(\sum_i n_i^2\), since \((n_i-1)^2+(n_j+1)^2-(n_i^2+n_j^2)=2(n_j-n_i)+2>0\).
This contradicts maximality.
Therefore all but at most one parts must have size exactly \(n-k\), giving the stated partition.
Writing \(n=q(n-k)+b\), \(0\le b<n-k\), yields that form, and substituting into \eqref{eq:cmp_edge} gives \eqref{eq:env_mk}.
\hfill$\qed$

Proposition \ref{prop:env_formula} yields a closed-form staircase of strict-maximal points in \(\mathcal K_n\), providing a cheap dense-regime backbone library.
Detailed envelope-versus-baseline performance comparisons are deferred to Figures \ref{fig:backbone_only_lambda2}--\ref{fig:backbone_only_lambda2_low}.

At very sparse densities, the first two envelope levels are key anchors: at \(k=1\), Proposition~\ref{prop:env_formula} gives \(K_{1,n-1}\), i.e., the star graph, which is optimal at the minimum connected budget \(m=n-1\) \cite{Mohar91}; at \(k=2\), it gives \(K_{2,n-2}\), whose optimality at the corresponding budget is a well-known open conjecture \cite{Kolokolnikov15}.

For backbone construction, given target \((n,m)\), choose \(k^\star=\max\{k\in\{1,\dots,n-2\}:m_{\mathrm{env}}(k)\le m\}\); because \(m_{\mathrm{env}}(k)\) is monotone nondecreasing, \(k^\star\) is found by integer binary search in \(O(\log n)\) time.
The envelope candidate is then the complete multipartite graph from Proposition \ref{prop:env_formula} at level \(k^\star\) (equivalently, using \eqref{eq:env_division}--\eqref{eq:env_mk}).
Its algebraic-connectivity score is \(\lambda_2=k^\star\) (or \(\lambda_2=n\) when \(m=\binom{n}{2}\)), and this candidate is passed to the routing/selection stage in Section II-C.
Algorithm~\ref{alg:env_backbone} summarizes this deterministic envelope construction.

\begin{algorithm}[th]
\caption{Envelope Backbone Construction}
\label{alg:env_backbone}
\begin{algorithmic}[1]
\State \textbf{Input:} \(n,m\)
\State \textbf{Output:} Envelope backbone \(G_{\mathrm{env}}\) with \(|E(G_{\mathrm{env}})|\le m\)
\If{\(m=\binom{n}{2}\)}
\State return \(K_n\) with \(\lambda_2=n\)
\EndIf
\State \(k^\star\leftarrow\max\{k\in\{1,\dots,n-2\}:m_{\mathrm{env}}(k)\le m\}\) by integer binary search on \eqref{eq:env_mk}
\State \(q\leftarrow \left\lfloor n/(n-k^\star)\right\rfloor\), \(b\leftarrow n-q(n-k^\star)\)
\State build \(K_{n_1,\dots,n_r}\) with \(q\) parts of size \(n-k^\star\) and one part \(b\) if \(b>0\), \(G_{\mathrm{env}}\leftarrow K_{n_1,\dots,n_r}\)
\State return \(G_{\mathrm{env}}\) and \(\lambda_2(G_{\mathrm{env}})=k^\star\)
\end{algorithmic}
\end{algorithm}

\begin{figure}[t]
    \centering
    \includegraphics[width=0.99\linewidth]{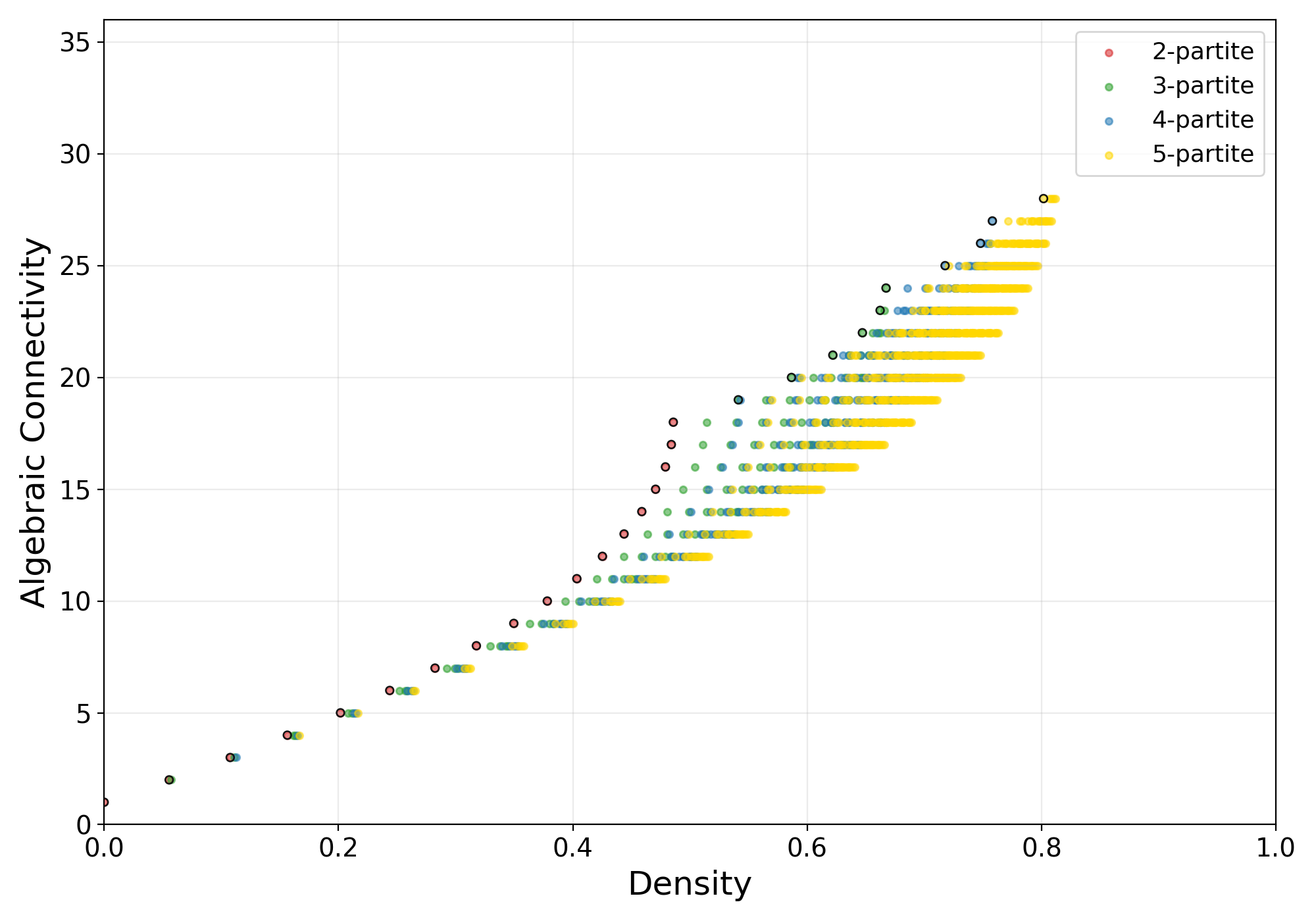}
    \vspace{-.15in}
    \caption{Complete 2/3/4/5-partite graphs in \(\mathcal K_{36}\) on the \((\rho,\lambda_2)\) plane.}
    \label{fig:envelope_staircase}
\end{figure}

\subsection{Cayley Backbone via Coset-Based Generating Sets}
While envelope points from \(\mathcal K_n\) are strong in dense regimes, our experiments show a clear weakness for \(\rho<\tfrac12\), where their \(\lambda_2\) is non-competitive (see Figure \ref{fig:backbone_only_lambda2_low}).
Therefore, the need for a second backbone family is driven not only by limited envelope coverage across feasible density levels, but also by quality degradation in low-to-intermediate densities.
Balanced complete multipartite graphs are a natural anchor because they are guaranteed optimal at their corresponding budgets \cite{ishii2016extensions}.
This raises a natural question: beyond \(\mathcal K_n\), is there a structured graph family that extends balanced complete multipartite graphs, yet can realize stronger candidates in other density ranges?
Cayley graphs provide such a generalization; as shown below, complete balanced multipartite graphs arise as a special Cayley case.

We next give a formal definition of Cayley graphs.

\begin{definition}
Let \(\Gamma\) be a finite group and \(S\subseteq \Gamma\setminus\{e\}\) with \(S=S^{-1}\).
The undirected Cayley graph \(\mathrm{Cay}(\Gamma,S)\) has vertex set \(\Gamma\) and edge set \(\{\{g,gs\}: g\in\Gamma,\ s\in S\}\).
\end{definition}

The inverse-closed condition ensures undirected edges, while group actions provide strong symmetry and reusable structure for sampling.

The next proposition establishes that the balanced complete multipartite case is a special instance of Cayley graphs.

\begin{proposition}[Balanced Multipartite as Cayley]
\label{prop:balanced_cmp_cayley}
Let \(\Gamma\) be a finite group, and let \(H<\Gamma\) be a proper subgroup of index \(r\ge 2\).
Define \(S=\Gamma\setminus H\).
Then \(\mathrm{Cay}(\Gamma,S)\) is a complete balanced \(r\)-partite graph whose parts are the left cosets of \(H\).
\end{proposition}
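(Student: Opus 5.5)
The plan is to verify directly from the definition of \(\mathrm{Cay}(\Gamma,S)\) that two vertices are adjacent exactly when they lie in different left cosets of \(H\). After that, balance of the parts follows from Lagrange's theorem.

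First I will check that \(S\) is admissible. Since \(e\in H\), we have \(e\notin S\). Since \(H\) is closed under inverses, \(s\notin H\) implies \(s^{-1}\notin H\), so \(S=S^{-1}\). Also \(S\neq\emptyset\) because \(H\) is proper. Hence \(\mathrm{Cay}(\Gamma,S)\) is a well-defined undirected simple graph.

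Next comes the key adjacency characterization. For distinct \(g,h\in\Gamma\), the pair \(\{g,h\}\) is an edge iff \(h=gs\) for some \(s\in S\), that is, iff \(g^{-1}h\in S=\Gamma\setminus H\). Note that the condition \(g^{-1}h\in S\) is symmetric in \(g,h\), because \(S=S^{-1}\). On the other hand, \(g^{-1}h\in H\) iff \(gH=hH\). So \(g\) and \(h\) are adjacent iff \(gH\neq hH\). This gives two conclusions:
\begin{itemize}
\item Each left coset is an independent set.
\item Every pair of vertices from different cosets is joined.
\end{itemize}
The left cosets partition \(\Gamma\) into exactly \(r=[\Gamma:H]\ge 2\) classes, each of size \(|H|=|\Gamma|/r\) by Lagrange. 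The graph is therefore \(K_{|H|,\dots,|H|}\) with \(r\) parts, which is complete balanced \(r\)-partite.

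The main point needing care is the adjacency step. One must use the convention \(\{g,gs\}\), i.e.\ right multiplication, so that the relevant quantity is \(g^{-1}h\). With that convention the parts are the left cosets \(gH\), not the right cosets, and no normality of \(H\) is required. Everything else is routine.
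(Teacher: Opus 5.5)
Your proof is correct and takes essentially the paper's route: check \(e\notin S\) and \(S=S^{-1}\), then show \(g^{-1}h\in S\) exactly when \(g\) and \(h\) lie in different left cosets of \(H\), each coset having size \(|H|\). The details you add are valid and slightly sharpen the paper's terser argument: why \(S=S^{-1}\), why the adjacency condition is symmetric, and that normality of \(H\) is not needed.
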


{\em Proof:}
Since \(e\in H\), we have \(e\notin S\). Also \(S^{-1}=S\), so \(\mathrm{Cay}(\Gamma,S)\) is an undirected Cayley graph.
Let \(\Gamma=\bigsqcup_{\ell=1}^r g_\ell H\) be the left-coset decomposition, where \(\bigsqcup\) denotes disjoint union. Each part has size \(|H|\), so the partition is balanced.
For \(x,y\in\Gamma\), if they are in the same coset then \(x^{-1}y\in H\), hence \(x^{-1}y\notin S\), so no edge.
If they are in different cosets then \(x^{-1}y\notin H\), hence \(x^{-1}y\in S\), so an edge exists.
Therefore adjacency is exactly across distinct cosets, i.e., a complete balanced \(r\)-partite graph.
\hfill$\qed$

Proposition~\ref{prop:balanced_cmp_cayley} gives the key motivation for our construction: index-\(r\) coset-based generators recover the balanced complete \(r\)-partite when all nontrivial cosets are selected, and random inverse-closed sampling from these cosets extends that structure to lower density levels.
Numerically, we observe that generic random Cayley candidates are already competitive, while coset-based Cayley candidates are typically stronger.

An index-\(r\) subgroup requires that \(r\) divides \(n\), so direct coset construction is not always available.
To handle arbitrary \(n\), we use a lift-and-delete extension: construct a Cayley graph on \(n+j\) vertices where divisibility holds, then delete \(j\) vertices to return to size \(n\).
Numerically, lift-and-delete remains competitive for small lift sizes (e.g., \(j=1,2\)).
The following lemma indicates that deleting one vertex is equivalent to deleting any other vertex up to graph isomorphism, which simplifies decision-making in the \(j=1\) case.

\begin{lemma}[Vertex-Transitivity of Cayley Graphs]
\label{lem:cayley_vertex_transitive}
Let \(\Gamma\) be a finite group and \(S\subseteq \Gamma\setminus\{e\}\) with \(S=S^{-1}\).
Then \(\mathrm{Cay}(\Gamma,S)\) is vertex-transitive.
\end{lemma}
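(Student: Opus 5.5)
The plan is to exhibit, for any two vertices, an explicit automorphism of \(\mathrm{Cay}(\Gamma,S)\) carrying one to the other. The natural candidates are left translations. For each \(h\in\Gamma\), define \(\phi_h:\Gamma\to\Gamma\) by \(\phi_h(g)=hg\). Then vertex-transitivity reduces to two checks: each \(\phi_h\) is a graph automorphism, and the family \(\{\phi_h\}\) acts transitively on \(\Gamma\).

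First, \(\phi_h\) is a bijection on the vertex set \(\Gamma\), with inverse \(\phi_{h^{-1}}\). Next I will show that it preserves adjacency in both directions. By the definition of \(\mathrm{Cay}(\Gamma,S)\), vertices \(x\) and \(y\) are adjacent if and only if \(y=xs\) for some \(s\in S\), that is, if and only if \(x^{-1}y\in S\). Because \(S=S^{-1}\), this condition is symmetric in \(x\) and \(y\), so the adjacency relation is well defined on unordered pairs. Now
\[
(hx)^{-1}(hy)=x^{-1}h^{-1}hy=x^{-1}y .
\]
Hence \(\{x,y\}\) is an edge if and only if \(\{hx,hy\}\) is an edge, so \(\phi_h\) maps edges to edges and non-edges to non-edges.

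Finally, for transitivity, given vertices \(u,v\in\Gamma\), set \(h=vu^{-1}\). Then \(\phi_h(u)=v\), so every vertex can be sent to every other vertex by an automorphism. This completes the argument.

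There is no real obstacle here. The only point requiring care is to use left multiplication for the automorphisms. This is because edges are defined through right multiplication by generators, \(\{g,gs\}\), and left multiplication commutes with that. Right translations would not in general preserve edges for non-abelian \(\Gamma\). The remark in the paper about deleting vertices then follows: if \(G-u\) and \(G-v\) are the graphs obtained by deleting \(u\) and \(v\), then \(\phi_{vu^{-1}}\) restricts to an isomorphism \(G-u\cong G-v\).
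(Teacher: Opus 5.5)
Your proof is correct and matches the paper's approach: the paper simply cites Theorem 3.1.2 of Godsil--Royle, which rests on exactly this translation argument, and you correctly use left translations $g\mapsto hg$, the ones compatible with the paper's edge convention $\{g,gs\}$. Your closing remark, that $\phi_{vu^{-1}}$ restricts to an isomorphism $G-u\cong G-v$, is also what justifies the paper's claim that single-vertex deletion is choice-free.
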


{\em Proof:}
This is a standard property of Cayley graphs; see Theorem 3.1.2 in \cite{godsil_royle_agt_2001}.
\hfill$\qed$

For \(j=1\), deletion is choice-free; for \(j>1\), deletion is not choice-free, so we use random deletion and leave principled selection for future work.

How the choice of group family and subgroup affects algebraic connectivity has not been systematically investigated and is left as a future direction.
For implementation simplicity, we use standard group families (semidihedral, dihedral, and cyclic) and fixed subgroup templates.
We therefore define the Cayley backbone as a coset-based generating-set Cayley construction, with lift-and-delete applied when divisibility constraints prevent direct index-\(r\) construction.
Algorithm~\ref{alg:cayley_backbone} gives the corresponding fixed-index-\(r\) backbone procedure.
When the feasible degree is too small (e.g., \(k=1\) or \(k=2\)), sampled Cayley candidates can be disconnected.

\begin{algorithm}[th]
\caption{Index-\(r\) Cayley Backbone Construction}
\label{alg:cayley_backbone}
\begin{algorithmic}[1]
\State \textbf{Input:} \(n,m,r\), sample count \(t\ge 1\)
\State \textbf{Output:} Cayley backbone \(G_{\mathrm{cay}}\)
\State compute lift-delete \(j\), set \(n_{\mathrm{lift}}\leftarrow n+j\)
\State set \(k\leftarrow\min\{n_{\mathrm{lift}}-1,\lfloor 2m/(n-j)\rfloor\}\)
\State choose preset group \(\Gamma\) and index-\(r\) subgroup \(H\) (with lift if needed)
\State set \(k\) to the largest degree \(\le k\) feasible for the coset sampler
\State set \(\widehat{\tau}\leftarrow \frac{n_{\mathrm{lift}}^2}{2}\cdot\frac{r-1}{r}\)
\If{\(j=0\) and \(m=\widehat{\tau}\)}
\State return balanced complete \(r\)-partite graph
\EndIf
\State initialize \(\mathcal{C}\leftarrow\emptyset\); sample one shared deletion set \(D\), \(|D|=j\)
\If{\(k\le |\Gamma\setminus H|\)}
\State sample \(t\) inverse-closed generating sets \(S_1,\ldots,S_t\) of size \(k\) from \(\Gamma\setminus H\)
\Else
\State sample \(t\) inverse-closed generating sets by taking all elements in \(\Gamma\setminus H\) and filling the remaining \(k-\lvert\Gamma\setminus H\rvert\) generators from \(H\setminus\{e\}\)
\EndIf
\State for each distinct sampled \(S_s\), form \(\widetilde G_s=\mathrm{Cay}(\Gamma,S_s)-D\); add it to \(\mathcal C\) only if connected and \(|E(\widetilde G_s)|\le m\)
\State if \(\mathcal C=\emptyset\), return failure
\State return \(G_{\mathrm{cay}}=\arg\max_{G\in\mathcal{C}}\lambda_2(G)\)
\end{algorithmic}
\end{algorithm}

At the coverage level, index-\(r\) Cayley construction reaches edge ratios up to approximately \((r-1)/r\). Hence indices \(2,3,4\) cover low-to-intermediate densities up to approximately \(3/4\), i.e., most regimes where the envelope points are not dominant in practice.
Based on this coverage property, we now instantiate a deterministic density-routing rule that decides which family (or both) is evaluated at each target density, and how Cayley indices are selected under the corresponding edge feasibility.

\subsection{Density Routing and Backbone Selection Rule}
We operationalize the above family-level observations using a compact overlap rule:
envelope is used for \(\rho\in[0,0.1]\cup[0.5,1]\), Cayley is used for \(\rho\in[0.05,0.75]\), and both families are evaluated on the overlap windows \([0.05,0.1]\) and \([0.5,0.75]\).
For Cayley candidates, we use index-\(2/3/4\) coset constructions with inverse-closed sampling under edge-budget feasibility.
Specifically, index-2 uses semidihedral templates for power-of-two \(n\ge16\) without lifting and dihedral templates otherwise, while indices 3 and 4 use cyclic templates.
For each index \(r\in\{2,3,4\}\), let \(j_r\) be the lift-delete count.
For routing, \(k_{\max}^{(r)}\) is an optimistic degree cap, \(m_{\mathrm{post}}^{(r)}\) is a projected edge count, and \(d_{\mathrm{post}}^{(r)}\) is a density-based pocket coordinate:
\begin{equation}
\begin{aligned}
k_{\max}^{(r)}&=\min\!\left\{n+j_r-1,\bigg\lfloor\frac{2m}{n-j_r}\bigg\rfloor\right\},\\
m_{\mathrm{post}}^{(r)}&=\bigg\lfloor\frac{k_{\max}^{(r)}(n-j_r)}{2}\bigg\rfloor,\quad
d_{\mathrm{post}}^{(r)}=\frac{2m_{\mathrm{post}}^{(r)}}{n^2}.
\end{aligned}
\label{eq:cayley_pocket}
\end{equation}
Deleting \(D\), \(|D|=j_r\), from a degree-\(k\) graph gives
\[
|E(G-D)|=k(n-j_r)/2+|E(G[D])|.
\]
We therefore check actual edge counts and connectivity before accepting candidates.

Without deletion, at balanced index-\(r\) endpoints, \(d_{\mathrm{post}}^{(r)}\) equals \(1-\frac{1}{r}\), i.e., \(1/2,2/3,3/4\) for \(r=2,3,4\).
Density-based pocket tests are then applied as \(d_{\mathrm{post}}^{(2)}\in(0,0.6)\), \(d_{\mathrm{post}}^{(3)}\in(\tfrac{1}{2},\tfrac{2}{3})\), and \(d_{\mathrm{post}}^{(4)}\in(\tfrac{2}{3},\tfrac{3}{4})\), evaluating all valid indices and retaining the feasible Cayley candidate with largest initial \(\lambda_2\).
This matches the practical coverage of index-\(2/3/4\) Cayley constructions at approximately \(1/2,2/3,3/4\), respectively (exact in no-lift cases). Empirically, for \(\rho>3/4\), envelope points are typically stronger (see Figures \ref{fig:backbone_only_lambda2} and \ref{fig:backbone_only_lambda2_dense}), so routing favors envelope there.
In overlap windows, both families are evaluated; each attempts to produce a backbone, and when both are feasible, the final family choice is made after completion by comparing final \(\lambda_2\).
In Cayley-only windows, if sampled Cayley candidates are infeasible/disconnected (typically at very small feasible degree), envelope is used as fallback.

The routing rule specifies \emph{when} each family is activated; Algorithm~\ref{alg:backbone} summarizes \emph{how} the backbone stage combines envelope construction (Algorithm~\ref{alg:env_backbone}), Cayley construction (Algorithm~\ref{alg:cayley_backbone}), overlap handling, and fallback.

\subsection{Backbone Complexity and Scalability}
For fixed \((n,m)\), envelope-level lookup is \(O(\log n)\) via monotone integer binary search. Adjacency construction costs \(O(n^2)\). Intact Cayley spectra use group representations \cite{babai79spectra}: per candidate, cyclic FFTs and semidihedral \(2\times2\) Hermitian Fourier blocks cost \(O(n\log n)\); direct dihedral summation costs \(O(nk)\), \(k=|S|\). After deletion, a dense selected-eigenvalue solver costs \(O(n^3)\) in the worst case. Candidate evaluations are independent and parallelizable, but run serially here.

Because backbone budgets are \(O(n)\)-level discretizations of a \(\Theta(n^2)\) feasible edge range, the residual gap to target is typically \(\Theta(n)\) edges. Hence, when completion is the dominant sequential stage, backbone initialization can reduce the effective horizon by about one order (\(\Theta(n^2)\to\Theta(n)\)), consistent with observed scaling.

In practice, we use fixed \(t=500\) sampling attempts per eligible index, removing duplicate generating sets. Increasing \(t\) from 500 up to \(10^4\) gives marginal quality gains in tested ranges. Although required \(t\) may increase with \(n\) in principle, fixed-\(t\) quality remains stable in our tested ranges.

\begin{algorithm}[th]
\caption{Density-Dependent Backbone Construction}
\label{alg:backbone}
\begin{algorithmic}[1]
\State \textbf{Input:} \(n,m\), Cayley sampling count \(t\)
\State \textbf{Output:} Backbone candidate list \(\mathcal B\)
\State compute \(\rho\) from \((n,m)\) via \eqref{eq:rho_connected}
\State set \(\texttt{env\_active}\leftarrow[\rho\in[0,0.1]\cup[0.5,1]]\), \(\texttt{cay\_active}\leftarrow[\rho\in[0.05,0.75]]\)
\State initialize \(\mathcal B\leftarrow[\,]\)
\If{\(\texttt{env\_active}\)}
\State \(G_{\mathrm{env}}\leftarrow\) Algorithm~\ref{alg:env_backbone}\((n,m)\)
\State append \(G_{\mathrm{env}}\) to \(\mathcal B\)
\EndIf
\If{\(\texttt{cay\_active}\)}
\State run Algorithm~\ref{alg:cayley_backbone}\((n,m,r,t)\) for every eligible index \(r\) from \eqref{eq:cayley_pocket}
\If{a connected Cayley candidate exists}
\State append the best feasible Cayley candidate to \(\mathcal B\)
\EndIf
\EndIf
\If{\(\mathcal B\) is empty}
\State \(G_{\mathrm{env}}\leftarrow\) Algorithm~\ref{alg:env_backbone}\((n,m)\) \Comment{fallback}
\State append \(G_{\mathrm{env}}\) to \(\mathcal B\)
\EndIf
\State return \(\mathcal B\)
\end{algorithmic}
\end{algorithm}

\section{Reinforcement Learning Completion} \label{sec:RL}

After backbone construction, we solve only the residual edge-allocation problem by a sequential policy.
Given an initial connected backbone \(G_0=(V,E_0)\) with \(|E_0|\le m_{\mathrm{target}}\), each step adds one non-edge for \(T=m_{\mathrm{target}}-|E_0|\) steps.
Thus the RL stage does not design a graph from scratch; it refines a density-guided backbone under a fixed residual budget.

\subsection{Residual Edge-Addition MDP}
A state is the current graph \(G_t=(V,E_t)\), and an action is a non-edge \((i,j)\notin E_t\).
The transition is the update \(E_{t+1}=E_t\cup\{(i,j)\}\), and the episode terminates at \(t=T\) (equivalently \(|E_t|=m_{\mathrm{target}}\)).
In our implementation, episodes are sampled over graph sizes \(12\le n\le24\) and random target densities, then mapped to connected-range edge budgets by \eqref{eq:rho_connected}.

\subsection{State and Candidate Representation}
The policy uses structural node, pair, and global features:

\[
\begin{aligned}
x_i&=\big[d_i/(n-1),\ d_i^{(2)}/(n-1),\ C_i\big],\\
p_{ij}&=\big[\mathrm{CN}^{\mathrm{norm}},\ \mathrm{Jaccard},\ \delta_{ij},\ u_{ij},\ v_{ij}\big],\\
g_t&=\big[\log(n)/\log(64),\ \rho_{\mathrm{target}},\ \rho_t\big].
\end{aligned}
\]

Here \(d_i\) is the node degree, \(d_i^{(2)}\) is the corresponding two-hop neighbor count (excluding the node itself and one-hop neighbors), \(C_i\) is the local clustering coefficient, 
\begin{align*}
\mathrm{CN}^{\mathrm{norm}} = \frac{|N(i)\cap N(j)|}{n-2}, \;\;\;
\mathrm{Jaccard} = \frac{|N(i)\cap N(j)|}{|N(i)\cup N(j)|},
\end{align*} 
\(\mathrm{dist}(i,j)\) is shortest-path distance in \(G_t\), \(\delta_{ij}=\min\{\mathrm{dist}(i,j),4\}/4\), \(u_{ij}=(d_i+d_j)/(2(n-1))\), and \(v_{ij}=|d_i-d_j|/(n-1)\).

We form nonedges among up to 32 nodes ranked by a weighted sum of their features, supplement to 32 pairs when available, and retain at most 96 using degree and distance scores. Inference requires no intermediate spectral computations.

\subsection{Policy and Value Networks}
For each candidate non-edge \((i,j)\), we form the symmetric representation
\[
z_{ij}=[\,|x_i-x_j|,\ x_i\odot x_j,\ p_{ij},\ g_t\,]\in\mathbb R^{14}.
\]
It is scored by a multilayer perceptron (MLP) with two 64-unit ReLU hidden layers and one scalar output.

The action distribution is the softmax over candidate scores.
Evaluation selects the largest score. A separate value MLP with the same hidden widths uses \([n^{-1}\sum_i x_i,g_t]\).

\subsection{Training Objective}
Completion is trained with proximal policy optimization (PPO) \cite{schulman2017ppo}, using a value baseline and entropy regularization.
The per-step reward is
\[
r_t=\frac{\lambda_2(G_{t+1})-\lambda_2(G_t)}{n}+\beta\cdot \mathbf{1}_{\mathrm{terminal}}\frac{\lambda_2(G_T)}{n},
\]
where \(\beta\) is the terminal-bonus coefficient, set to 0.2.

We use generalized advantage estimates \(\widehat A_t\) with \(\gamma=0.99\), \(\lambda_{\mathrm{GAE}}=0.95\), value targets \(\mathcal R_t=\widehat A_t+V_{\mathrm{old}}(s_t)\), and rollout-standardized policy advantages \(A_t\).

With importance ratio \(\eta_t(\theta)=\pi_\theta(a_t|s_t)/\pi_{\theta_{\mathrm{old}}}(a_t|s_t)\) and clipped ratio
\(\bar{\eta}_t(\theta)=\mathrm{clip}(\eta_t(\theta),1-\epsilon,1+\epsilon)\), we use:
\[
\begin{aligned}
\mathcal L_{\mathrm{PPO}}
=&-\mathbb E\!\left[\min\!\left(\eta_t(\theta)A_t,\ \bar{\eta}_t(\theta)A_t\right)\right]\\
&+c_v\,\mathbb E\!\left[(V_\psi(s_t)-\mathcal{R}_t)^2\right]
-c_e\,\mathbb E\!\left[\mathcal H(\pi_\theta(\cdot|s_t))\right].
\end{aligned}
\]
We evaluate a checkpoint after \(3\times10^4\) training transitions, using \(\epsilon=0.2\), \(c_v=0.5\), \(c_e=0.01\), and initial Adam learning rate \(3\times10^{-4}\).

\section{Numerical Results} \label{sec:simulation}

This section evaluates the proposed framework in terms of solution quality, computational efficiency, and scalability across different graph sizes and density regimes.

\begin{table*}[t]
    \centering
    \caption{Quality summary. FVG: absolute \(\lambda_2\) (mean\(\pm\)std). Others: \(\Delta\lambda_2\) mean\(\pm\)std / win\% / match\%. \textcolor{red}{Red} = best \(\Delta\lambda_2\); \textcolor{blue}{blue} = best win\%.}
    \label{tab:baseline_lambda2_summary}
    \footnotesize
    \setlength{\tabcolsep}{1.9pt}
    \renewcommand{\arraystretch}{1.00}
    \resizebox{0.96\textwidth}{!}{%
    \begin{tabular}{lccccc}
        \toprule
        Method & \(n=8\) & \(n=16\) & \(n=32\) & \(n=64\) & \(n=128\) \\
        \midrule
        FVG      & \(3.34\pm1.77\) & \(6.72\pm4.18\) & \(13.99\pm8.81\) & \(28.87\pm18.19\) & \(59.16\pm36.84\) \\
        \midrule
        MAC      & \(-0.47\pm0.41\)/0/25  & \(-1.27\pm0.65\)/1/2  & \(-3.03\pm1.42\)/0/2  & \(-6.62\pm3.10\)/0/1  & \(-13.72\pm6.52\)/0/1 \\
        MDMD     & \(-0.08\pm0.35\)/10/50 & \(-0.47\pm0.40\)/7/12 & \(-1.09\pm0.73\)/4/5  & \(-2.15\pm1.49\)/1/3  & \(-3.89\pm2.49\)/1/1 \\
        WTS      & \(0.27\pm0.36\)/50/50  & \(0.18\pm0.22\)/\bestwin{83}/12 & \(-0.23\pm0.31\)/12/2 & \(-0.76\pm0.63\)/2/2  & \(-1.63\pm1.12\)/1/1 \\
        k-opt    & \(0.19\pm0.29\)/45/55  & \(0.20\pm0.16\)/81/19 & \(0.13\pm0.13\)/\bestwin{90}/10 & \(0.04\pm0.05\)/\bestwin{80}/20 & \(0.01\pm0.02\)/58/8 \\
        SDP-step & \(0.08\pm0.57\)/30/40  & \(0.29\pm0.57\)/67/11 & \(0.39\pm0.71\)/71/4  & --                     & -- \\
        OA-PMC   & \bestdelta{\(0.31\pm0.40\)}/\bestwin{55}/45  & --                     & --                     & --                     & -- \\
        BB-RL(Ours) & \(0.25\pm0.42\)/40/50 & \bestdelta{\(0.45\pm0.62\)}/66/11 & \bestdelta{\(0.73\pm0.85\)}/71/3 & \bestdelta{\(1.13\pm1.22\)}/73.2/2 & \bestdelta{\(1.61\pm2.00\)}/\bestwin{66.6}/1.6 \\
        \bottomrule
    \end{tabular}
    }
\end{table*}

\begin{table}[t]
    \centering
    \caption{Mean online runtime (s). \textcolor{red}{Red}: faster of matched MAC/BB-RL runs. \(\dagger\): historical timings. BB-RL completion includes final selection.}
    \label{tab:baseline_runtime_summary}
    \footnotesize
    \setlength{\tabcolsep}{1.3pt}
    \renewcommand{\arraystretch}{1.05}
    \resizebox{\columnwidth}{!}{%
    \begin{tabular}{lccccc}
        \toprule
        Method & \(n=8\) & \(n=16\) & \(n=32\) & \(n=64\) & \(n=128\) \\
        \midrule
        MAC      & \besttime{\(4.35\times10^{-3}\)} & \besttime{\(6.31\times10^{-3}\)} & \(1.62\times10^{-2}\) & \(4.63\times10^{-2}\) & \(1.53\times10^{-1}\) \\
        MDMD\(^{\dagger}\) & \(8.59\times10^{-4}\) & \(5.42\times10^{-3}\) & \(4.92\times10^{-2}\) & \(4.08\times10^{-1}\) & \(5.14\) \\
        FVG\(^{\dagger}\) & \(1.38\times10^{-3}\) & \(7.46\times10^{-3}\) & \(1.02\times10^{-1}\) & \(9.27\times10^{-1}\) & \(24.06\) \\
        WTS\(^{\dagger}\) & \(5.47\times10^{-1}\) & \(1.00\) & \(2.14\) & \(5.53\) & \(19.78\) \\
        k-opt\(^{\dagger}\) & \(5.20\times10^{-2}\) & \(4.04\times10^{-1}\) & \(2.70\) & \(8.60\) & \(38.78\) \\
        SDP-step\(^{\dagger}\) & \(1.48\times10^{-1}\) & \(1.42\) & \(24.88\) & -- & -- \\
        OA-PMC\(^{\dagger}\) & \(10.62\) & -- & -- & -- & -- \\
        \midrule
        BB-RL(Ours, total) & \(4.85\times10^{-3}\) & \(7.76\times10^{-3}\) & \besttime{\(1.50\times10^{-2}\)} & \besttime{\(3.84\times10^{-2}\)} & \besttime{\(1.20\times10^{-1}\)} \\
        \hspace{0.6em}init & \(2.25\times10^{-3}\) & \(2.76\times10^{-3}\) & \(4.82\times10^{-3}\) & \(1.53\times10^{-2}\) & \(6.08\times10^{-2}\) \\
        \hspace{0.6em}completion & \(2.60\times10^{-3}\) & \(5.00\times10^{-3}\) & \(1.02\times10^{-2}\) & \(2.31\times10^{-2}\) & \(5.90\times10^{-2}\) \\
        \bottomrule
    \end{tabular}
    }
\end{table}

\subsection{Experimental Setup}
All methods are evaluated under the same edge-budget mapping \eqref{eq:rho_connected}.
We use 20 nontrivial edge budgets at \(n=8\) and 100 at each \(n\in\{16,32,64,128\}\), excluding the path and complete-graph endpoints. Each point maps to the same \((n,m)\) target across methods.
Each \((n,\rho)\) point is repeated 5 times; tables report mean/std over the density sweep from these per-point repeats. The quality metric is \(\lambda_2\) at the target budget, and runtime is wall-clock time.
Experiments are conducted on the Seawulf HPC cluster, long-40core partition, in exclusive-node mode using 1 node (8 CPU cores, 64\,GB RAM) per run.
In all experiments, sample-wise Cayley parallelization acceleration is disabled.
Most reported \(n\) are powers of two for clarity of scaling, but additional odd/even/prime/random \(n\) show similar qualitative behavior in our experiments; a finer characterization of how performance varies with integer structure of \(n\) is left for future work.
Implementation details, hyperparameter settings, and experiment scripts are available in our public repository~\cite{zhu2026code}.

\subsection{Baseline Comparison Among Existing Methods}\label{subsec:benchmark}
This subsection benchmarks full-construction baselines.
Compared methods include constructive heuristics (FVG \cite{ghosh2006growing}, MDMD \cite{8412478}), local search (WTS \cite{wei2014algebraic}, k-opt \cite{somisetty2024spectral}), relaxation/optimization (MAC \cite{9981584}, SDP-step \cite{wei2014algebraic}), and exact OA-PMC \cite{somisetty2025optimal}.
Tables \ref{tab:baseline_lambda2_summary}--\ref{tab:baseline_runtime_summary} summarize quality/runtime, and Figures \ref{fig:baseline_lambda2}--\ref{fig:baseline_lambda2_small} show representative quality curves.

\begin{figure}[ht]
    \centering
    \includegraphics[width=0.99\linewidth]{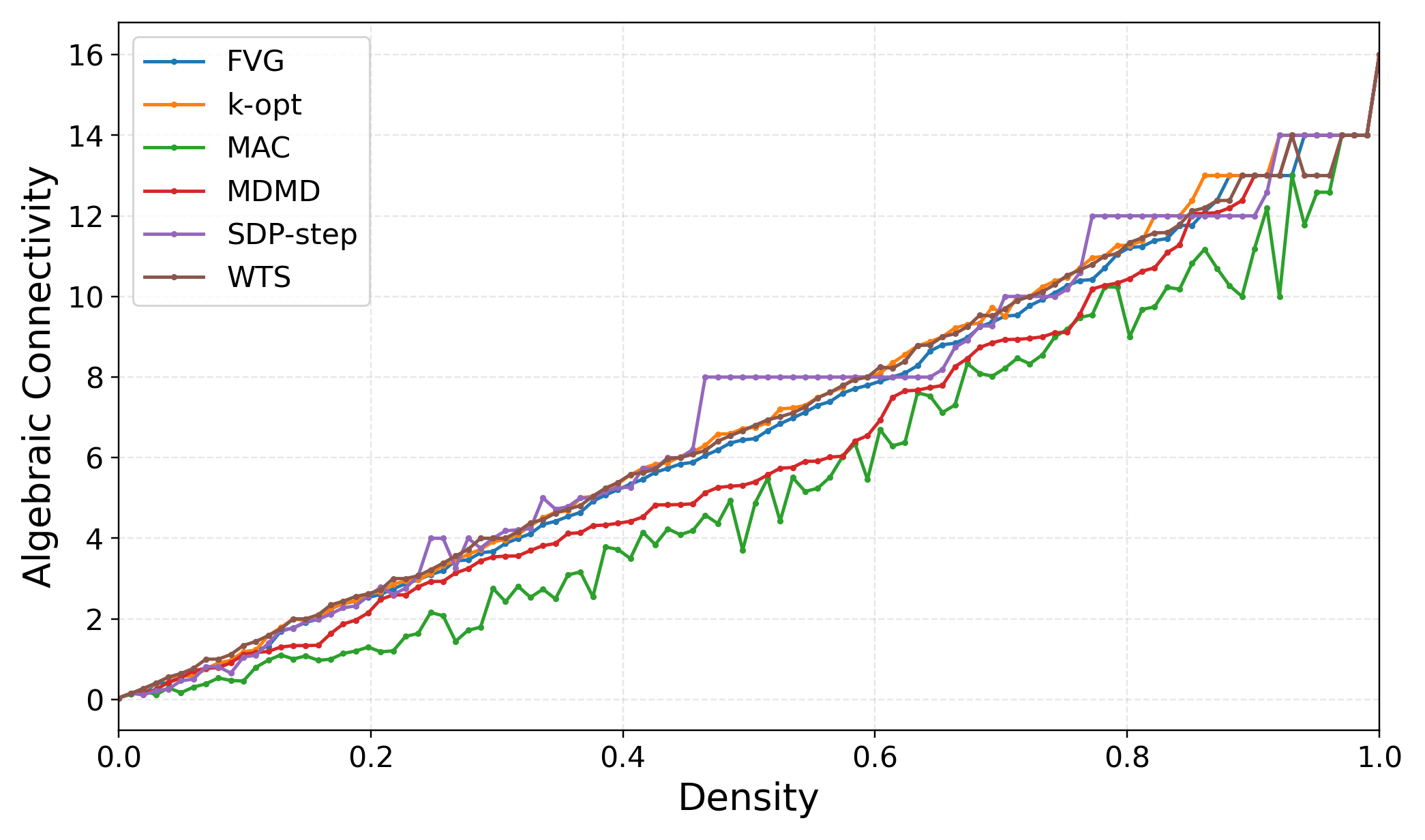}
\vspace{-.2in}    
    \caption{Baseline quality at \(n=16\).}
    \label{fig:baseline_lambda2}
\end{figure}

\begin{figure}[ht]
    \centering
    \includegraphics[width=0.95\linewidth]{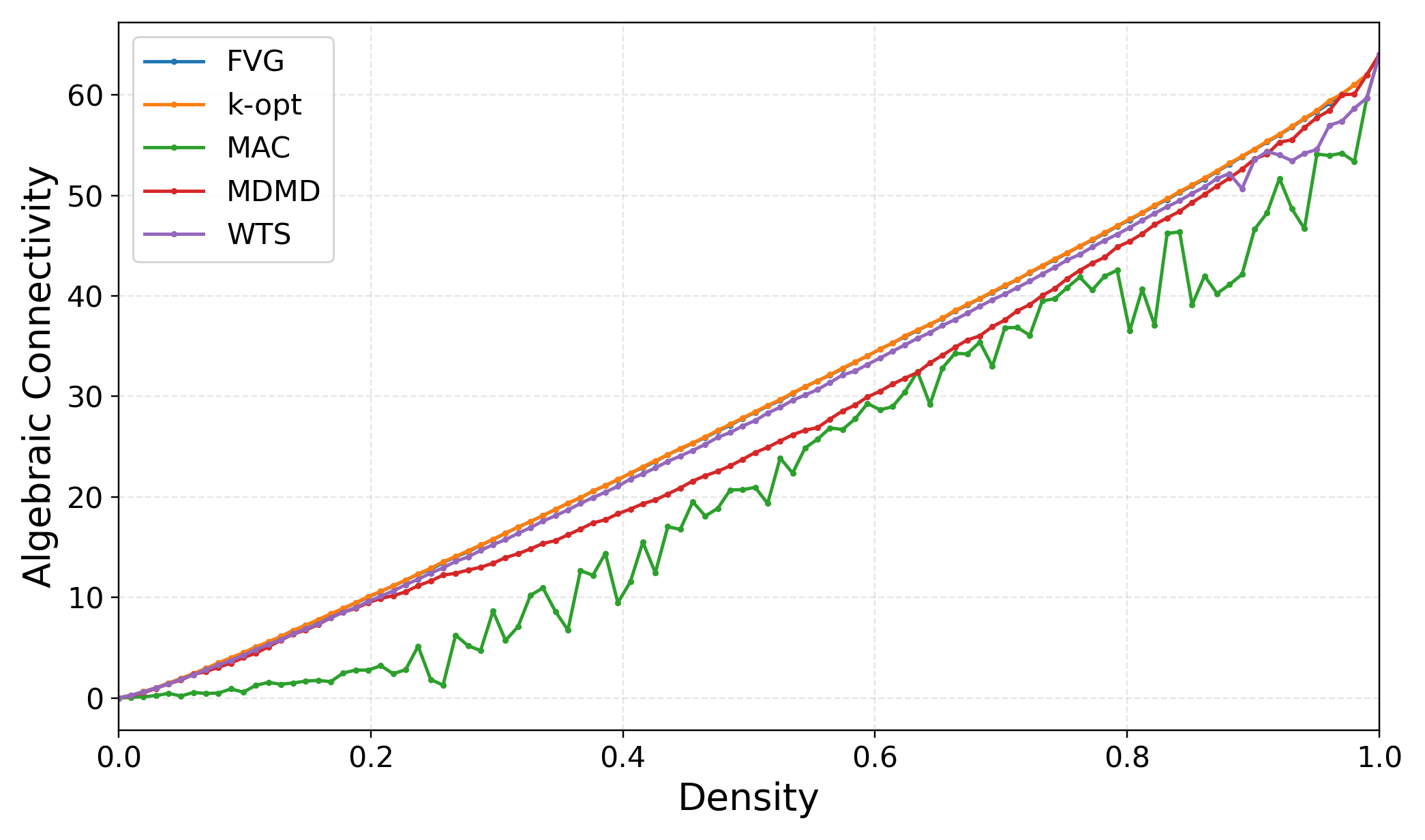}
    \caption{Baseline quality at \(n=64\).}
    \label{fig:baseline_lambda2_small}
\end{figure}

Results show the expected trade-off: MAC/MDMD are fast but lower in $\lambda_2$; WTS/k-opt are usually slower and only occasionally better at small \(n\); SDP-step/OA-PMC are higher in $\lambda_2$ when applicable but much more expensive. Hence FVG is the primary practical full-construction baseline.

\subsection{Backbone-Only Performance Across Density Regimes}\label{subsec:backbone_only}
We compare backbone-only outputs (no RL completion), including EIG \cite{Shahbaz23}, a complete-multipartite constructor based on complement-component decomposition that may return \(m\ge m_{\text{target}}\).
Figures \ref{fig:backbone_only_lambda2}--\ref{fig:backbone_only_lambda2_low} show dense and low-to-intermediate regimes; the envelope optimality-certified subset follows Corollary 4.6 of \cite{Shahbaz23}.
Overall, envelope dominates dense regimes, while Cayley is competitive in low-to-intermediate regimes.

\begin{figure}[t]
    \centering
    \includegraphics[width=0.95\linewidth]{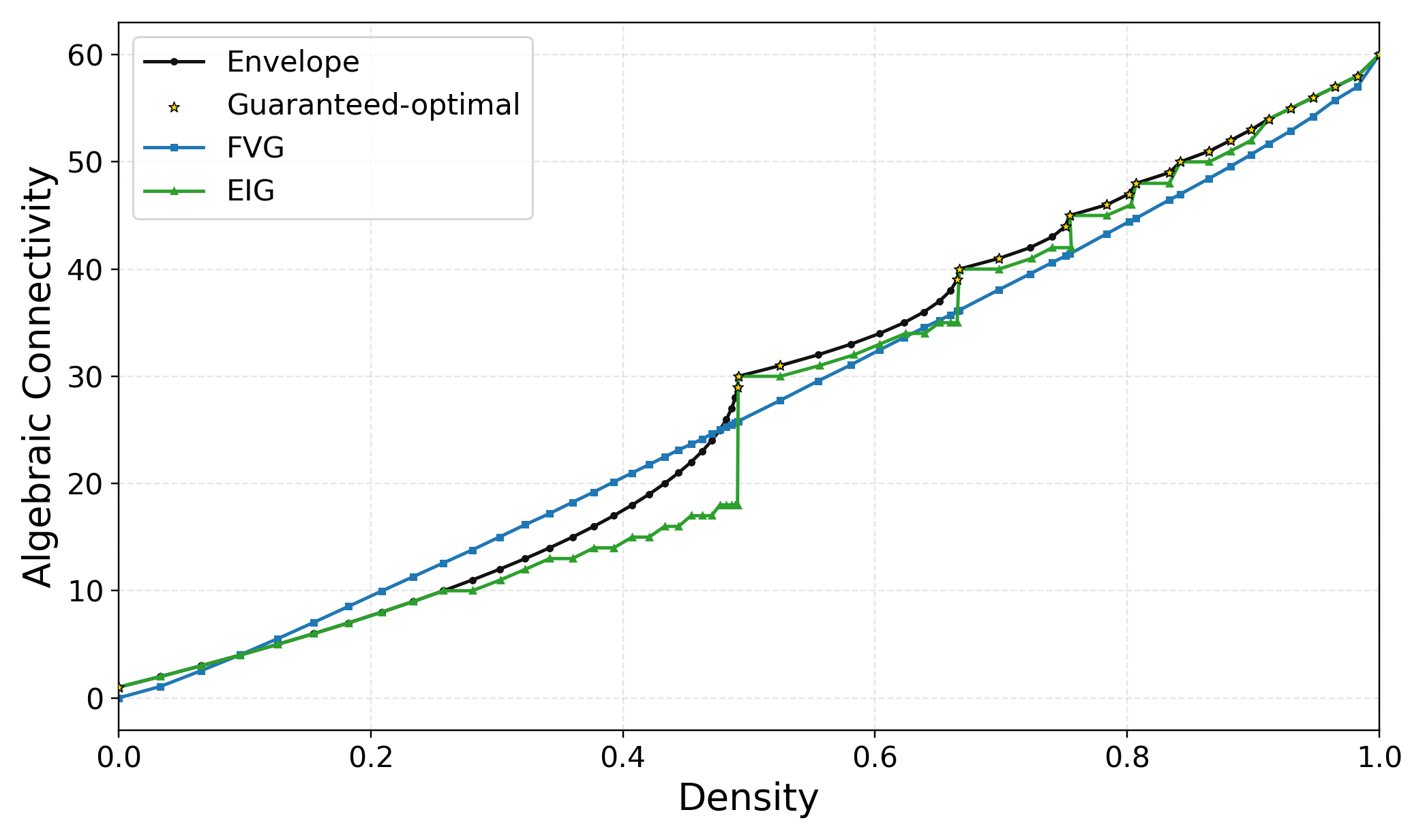}
    \caption{Envelope, FVG, and EIG at \(n=60\).}
    \label{fig:backbone_only_lambda2}
\end{figure}

\begin{figure}[t]
    \centering
    \includegraphics[width=0.95\linewidth]{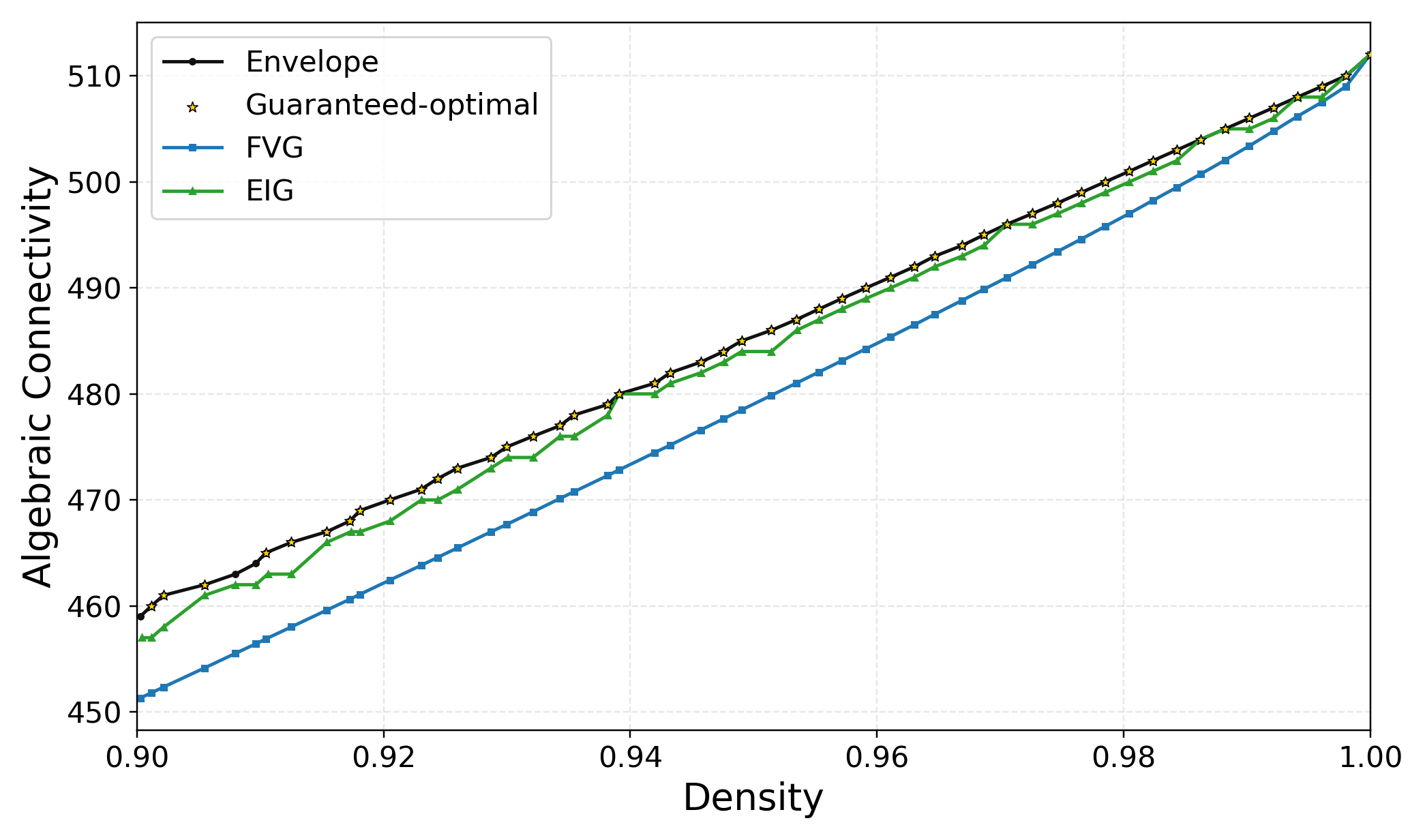}
    \caption{Envelope, FVG, and EIG at \(n=512\), \(\rho>0.9\).}
    \label{fig:backbone_only_lambda2_dense}
\vspace{-.1in}
\end{figure}

\begin{figure}[t]
    \centering
    \includegraphics[width=0.95\linewidth]{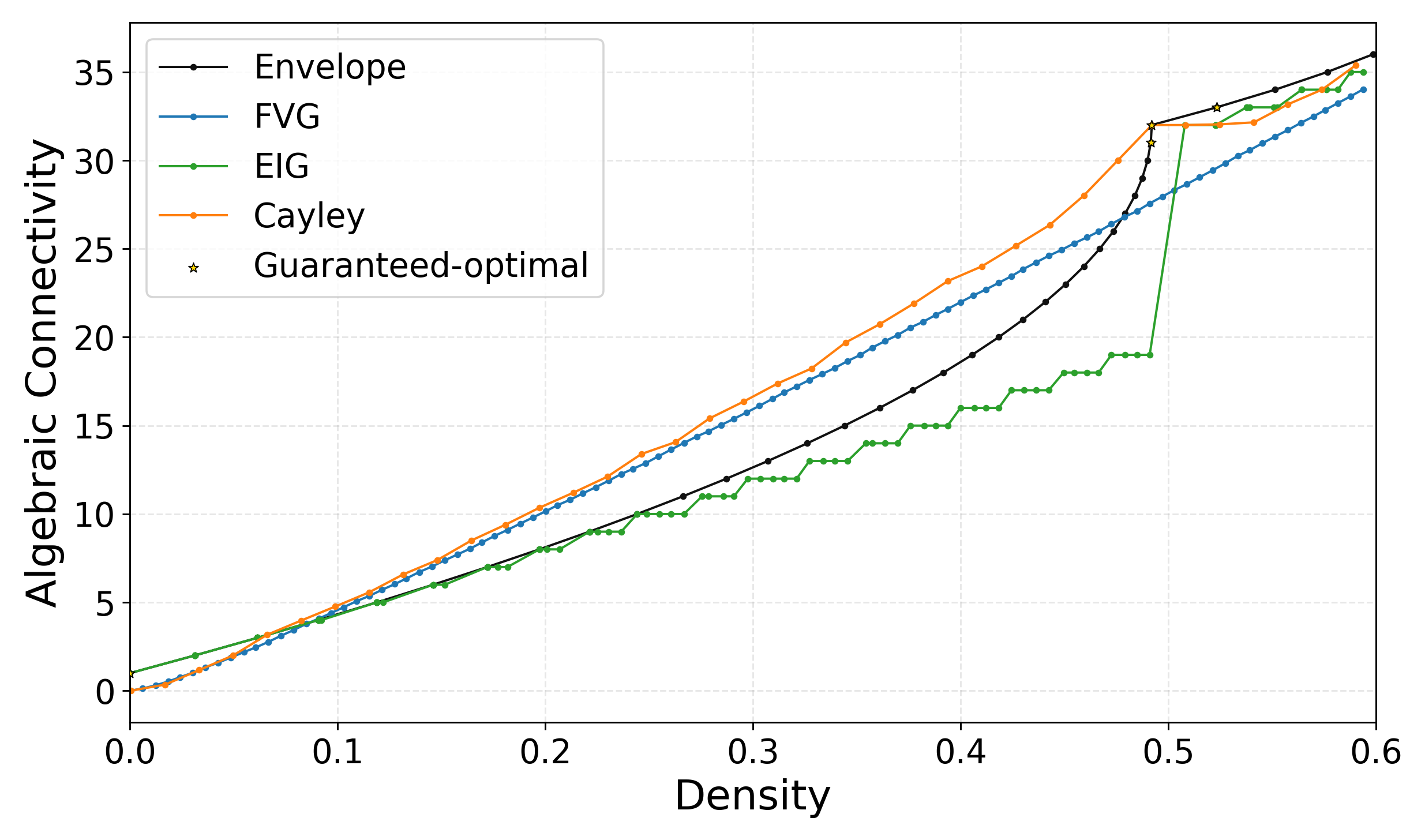}
    \caption{Low-intermediate quality at \(n=64\).}
    \label{fig:backbone_only_lambda2_low}
\end{figure}

\subsection{Backbone+RL Performance}
We compare backbone-initialized RL and FVG on identical \((n,m)\) targets; Figure \ref{fig:backbone_rl_lambda2_n64} and Tables \ref{tab:baseline_lambda2_summary}--\ref{tab:baseline_runtime_summary} summarize quality and runtime.
BB-RL has the largest mean improvement at \(n=16,32,64,128\) and lowest mean cost at \(n=32,64,128\). Reported runtimes are serial, and Cayley sampling admits straightforward parallel speedup, unlike sequential FVG.

\begin{figure}[t]
    \centering
    \includegraphics[width=0.95\linewidth]{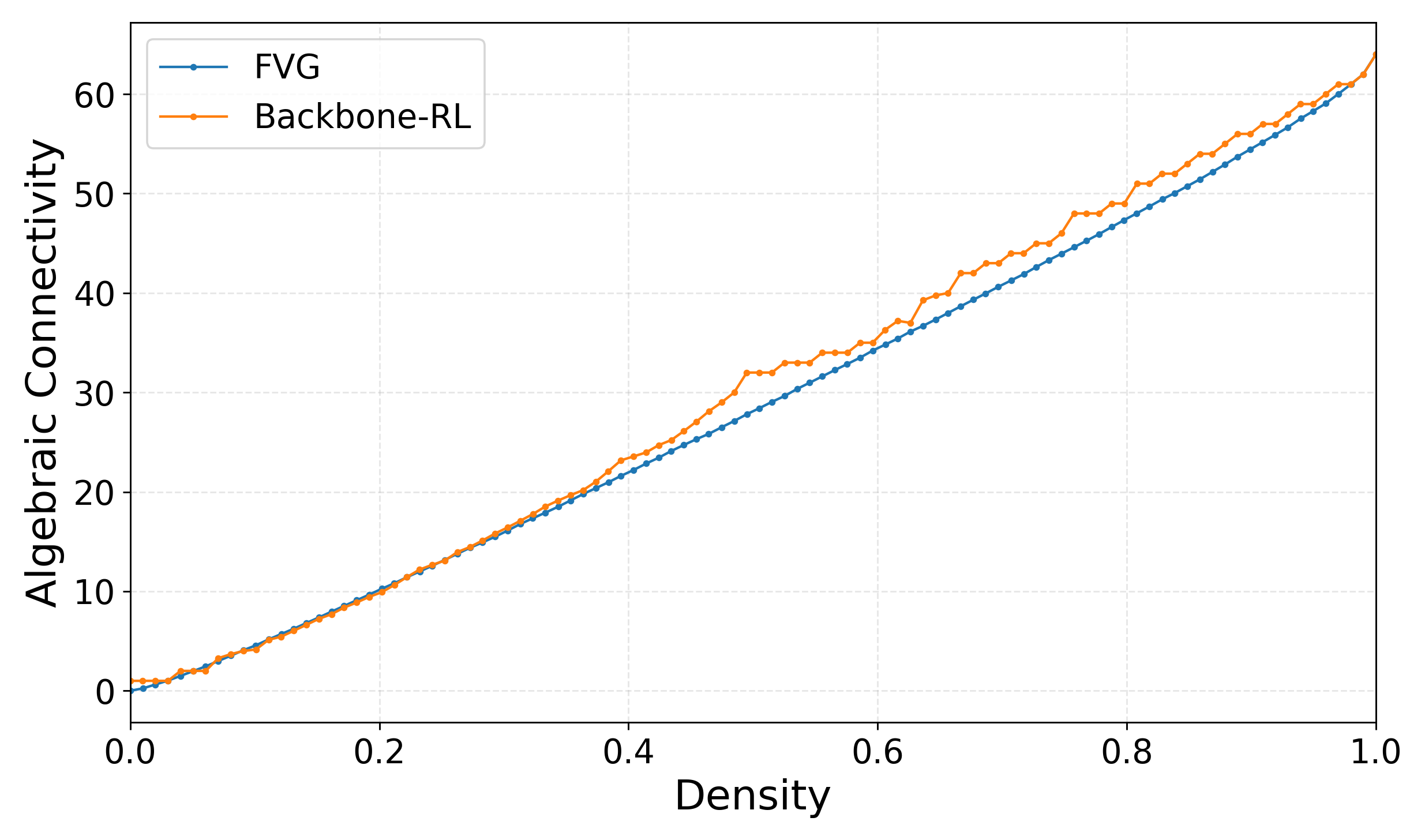}
    \caption{BB-RL vs FVG at \(n=64\).}
    \label{fig:backbone_rl_lambda2_n64}
\end{figure}

\section{Conclusion}
This paper presented a density-dependent framework for fixed-\((n,m)\) algebraic-connectivity maximization that separates structural initialization from completion.
A high-quality low-cost backbone plus RL completion gives the best non-exact quality/runtime scaling among tested baselines. As \(n\) grows, cheaper heuristics lose quality while stronger optimization-based methods become increasingly expensive, supporting backbone-first completion as a practical direction for scalability. Open directions include other cheaply constructible optimal graph families to densify backbone coverage, principled group/subgroup selection for Cayley backbones, characterizing how density-dependent behavior varies with integer structure of \(n\).


\bibliographystyle{unsrt}
\bibliography{susie,jicareer,consensus,push,related_work}

@article{vicsekmodel,
    author    = "A. Jadbabaie and J. Lin and A.S. Morse",
    title     = "Coordination of Groups of Mobile Autonomous Agents Using Nearest Neighbor Rules",
    journal   = "IEEE Transactions on Automatic Control",
    year      = "2003",
    volume    = "48",
    number    = "6",
    pages     = "988-1001",
}

@article{reza1,
    author    = "R. Olfati-Saber and R.M. Murray",
    title     = "Consensus Problems in Networks of Agents with Switching Topology and Time-Delays",
    journal   = "IEEE Transactions on Automatic Control",
    volume    = "49",
    number    = "9",
    pages     = "1520-1533",
    year      = "2004"
}

@article{ReBe05,
    author    = "W. Ren and R.W. Beard",
    title     = "Consensus Seeking in Multiagent Systems Under Dynamically Changing Interaction Topologies",
    journal   = "IEEE Transactions on Automatic Control",
    volume    = "50",
    number    = "5",
    pages     = "655-661",
    year      = "2005"
}

@article{survey,
    author    = "R. Olfati-Saber and J.A. Fax and R.M. Murray",
    title     = "Consensus and Cooperation in Networked Multi-Agent Systems",
    journal   = "Proceedings of the IEEE",
    year      = "2007",
    volume    = "95",
    number    = "1",
    pages     = "215-233"
}

@article{tacle,
    title     = "A Distributed Algorithm for Solving a Linear Algebraic Equation",
    author    = "S. Mou and J. Liu and A.S. Morse",
    journal   = "IEEE Transactions on Automatic Control",
    year      = "2015",
    volume    = "60",
    number    = "11",
    pages     = "2863-2878"
}

@article{split,
    author    = "L. Wang and J. Liu and B.D.O. Anderson and A.S. Morse",
    title     = "Split-spectrum based distributed state estimation for linear systems",
    journal   = "Automatica",
    year      = "2024",
    volume    = "161",
    pages     = "111421"
}

@inproceedings{push,
    author    = "D. Kempe and A. Dobra and J. Gehrke",
    title     = "Gossip-Based Computation of Aggregate Information",
    year      = "2003",
    booktitle = "Proceedings of the 44th IEEE Symposium on Foundations of Computer Science",
    pages     = "482-491"
}

@article{nedic2009distributed,
  title={Distributed subgradient methods for multi-agent optimization},
  author={Nedi\'c, A. and Ozdaglar, A.},
  journal={IEEE Transactions on Automatic Control},
  volume={54},
  number={1},
  pages={48--61},
  year={2009},
}

@book{godsil_royle_agt_2001,
  author    = {Chris Godsil and Gordon Royle},
  title     = {Algebraic Graph Theory},
  publisher = {Springer},
  year      = {2001},
  series    = {Graduate Texts in Mathematics},
  volume    = {207},
  doi       = {10.1007/978-1-4613-0163-9}
}

@incollection{Mohar91,
  author    = {B. Mohar},
  title     = {The Laplacian Spectrum of Graphs},
  booktitle = {Graph Theory, Combinatorics, and Applications},
  publisher = {Wiley},
  address   = {New York, NY, USA},
  year      = {1991},
  pages     = {871--898}
}

@article{Kolokolnikov15,
  author  = {T. Kolokolnikov},
  title   = {Maximizing algebraic connectivity for certain families of graphs},
  journal = {Linear Algebra and its Applications},
  volume  = {471},
  pages   = {122--140},
  year    = {2015}
}

@inproceedings{ishii2016extensions,
  author    = {Ryoya Ishii and Norikazu Takahashi},
  title     = {Extensions of a Theorem on Algebraic Connectivity Maximizing Graphs},
  booktitle = {Proceedings of the 2016 International Symposium on Nonlinear Theory and Its Applications (NOLTA2016)},
  pages     = {598--601},
  year      = {2016},
  month     = nov
}

@article{babai79spectra,
  title   = {Spectra of Cayley graphs},
  author  = {Babai, L{\'a}szl{\'o}},
  journal = {Journal of Combinatorial Theory, Series B},
  volume  = {27},
  number  = {2},
  pages   = {180--189},
  year    = {1979},
  issn    = {0095-8956},
  doi     = {10.1016/0095-8956(79)90079-0}
}

@article{klee_stamps_2022_rank1,
  author  = {Steven Klee and Matthew T. Stamps},
  title   = {Eigenvalues of Graph Laplacians Via Rank-One Perturbations},
  journal = {The Quarterly Journal of Mathematics},
  volume  = {73},
  number  = {2},
  pages   = {609--616},
  year    = {2022},
  doi     = {10.1093/qmath/haab045}
}

@inproceedings{ghosh2006growing,
  title={Growing Well-Connected Graphs},
  author={A. Ghosh and S. Boyd},
  booktitle={Proceedings of the 45th IEEE Conference on Decision and Control},
  pages={6605--6611},
  year={2006},
}

@article{wei2014algebraic,
  title={Algebraic connectivity maximization of an air transportation network: the flight routes' addition/deletion problem},
  author={P. Wei and L. Chen and D. Sun},
  journal={Transportation Research Part E: Logistics and Transportation Review},
  volume={61},
  pages={13--27},
  year={2014},
  doi={10.1016/j.tre.2013.10.008}
}

@inproceedings{somisetty2024spectral,
  title={Spectral Graph Theoretic Methods for Enhancing Network Robustness in Robot Localization},
  author={N. Somisetty and H. Nagarajan and S. Darbha},
  booktitle={Proceedings of the 63rd IEEE Conference on Decision and Control},
  year={2024},
  pages={8811--8818},
}

@article{somisetty2025optimal,
  title={Optimal Robust Network Design: formulations and Algorithms for Maximizing Algebraic Connectivity},
  author={N. Somisetty and H. Nagarajan and S. Darbha},
  journal={IEEE Transactions on Control of Network Systems},
  year={2025},
  volume = {12},
  number = {1},
  pages = {918--929},
  doi={10.1287/ijoc.2023.0320}
}

@ARTICLE{8412478,
  author={Li, Gang and Hao, Zhi Feng and Huang, Han and Wei, Hang},
  journal={IEEE Access}, 
  title={Maximizing Algebraic Connectivity via Minimum Degree and Maximum Distance}, 
  year={2018},
  volume={6},
  number={},
  pages={41249-41255},
  doi={10.1109/ACCESS.2018.2857411}}

@inproceedings{kurahashi2024depth,
  title={Depth-First Search Algorithms for Algebraic Connectivity Maximizing Regular Graphs},
  author={M. Kurahashi and N. Salaani and T. Migita and N. Takahashi},
  booktitle={Proceedings of the 12th International Symposium on Computing and Networking},
  pages={156--161},
  year={2024},
  doi={10.1109/CANDAR63185.2024.00034}
}

@article{kim2010bisection,
  title={Bisection Algorithm of Increasing Algebraic Connectivity by Adding an Edge},
  author={Y. Kim},
  journal={IEEE Transactions on Automatic Control},
  volume={55},
  number={1},
  pages={170--174},
  year={2010},
}

@inproceedings{yin2024accelerating,
  author = {M. Zhou and G. Liu and K. Lu and R. Mao and H. Liao},
  title = {Accelerating the Decentralized Federated Learning via Manipulating Edges},
  year = {2024},
  isbn = {9798400701719},
  url = {https://doi.org/10.1145/3589334.3645509},
  doi = {10.1145/3589334.3645509},
  booktitle = {Proceedings of the 2024 ACM Web Conference},
  pages = {2945--2954},
  numpages = {12},
}

@INPROCEEDINGS{9981584,
  author={K.J. Doherty and D.M. Rosen and J.J. Leonard},
  booktitle={Proceedings of the 2022 IEEE/RSJ International Conference on Intelligent Robots and Systems}, 
  title={Spectral Measurement Sparsification for Pose-Graph {SLAM}}, 
  year={2022},
  volume={},
  number={},
  pages={1--8},
  doi={10.1109/IROS47612.2022.9981584}}

@article{schulman2017ppo,
  title={Proximal Policy Optimization Algorithms},
  author={Schulman, John and Wolski, Filip and Dhariwal, Prafulla and Radford, Alec and Klimov, Oleg},
  journal={arXiv preprint arXiv:1707.06347},
  year={2017}
}

@misc{zhu2026code,
  author={Z. Zhu},
  title={Algebraic Connectivity Optimization},
  howpublished={GitHub repository. \url{https://github.com/ZeruZhu/Algebraic-Connectivity-Optimization-ACC2027}},
  year={2026}  
}

@article{MOSKAOYAMA2008677,
  title = {Maximum algebraic connectivity augmentation is {NP}-hard},
  journal = {Operations Research Letters},
  volume = {36},
  number = {6},
  pages = {677--679},
  year = {2008},
  issn = {0167-6377},
  doi = {https://doi.org10.1016/j.orl.2008.09.001},
  url = {https://www.sciencedirect.com/science/article/pii/S0167637708001077},
  author = {D. Mosk-Aoyama},
}

@article{Shahbaz23,
  title="{Algebraic connectivity: local and global maximizer graphs}",
  author="K. Shahbaz and M.N. Belur and A. Ganesh",
  journal={IEEE Transactions on Network Science and Engineering},
  volume={10},
  number={3},
  pages="1636--1647",
  year={2023}
}

@article{Fiedler73,
  author =       "M. Fiedler",
  title =        "{Algebraic connectivity of graphs}",
  journal =      "Czechoslovak Mathematical Journal",
  year =         "1973",
  volume = "23",
  number = "2",
  pages = "298--305"
}

@article{ogiwara,
    author    = "K. Ogiwara and T. Fukami and N. Takahashi",
    title     = "Maximizing Algebraic Connectivity in the Space of Graphs With a Fixed Number of Vertices and Edges",
    journal   = "IEEE Transactions on Control of Network Systems",
    year      = "2017",
    volume    = "4",
    number    = "2",
    pages     = "359-368"
}

@inproceedings{ming,
    author    = "M. Cao and C.W. Wu",
    title     = "Topology design for fast convergence of network consensus algorithms",
    booktitle = "Proceedings of the 2007 IEEE International Symposium on Circuits and Systems",
    year      = "2007",
    pages      = "1029-1032"
}

@inproceedings{moura,
    author    = "S. Kar and J.M.F. Moura",
    title     = "Topology for Global Average Consensus",
    booktitle = "Proceedings of the 40th Asilomar Conference on Signals, Systems and Computers",
    year      = "2006",
    pages      = "276-280"
}

@inproceedings{bayen,
    author    = "M. Rafiee and A.M. Bayen",
    title     = "Optimal Network Topology Design in Multi-Agent Systems for Efficient Average Consensus",
    booktitle = "Proceedings of the 49th IEEE Conference on Decision and Control",
    year      = "2010",
    pages      = "3877-3883"
}

@inproceedings{dai,
    author    = "R. Dai and M. Mesbahi",
    title     = "Optimal topology design for dynamic networks",
    booktitle = "Proceedings of the 50th IEEE Conference on Decision and Control",
    year      = "2011",
    pages      = "1280-1285"
}

@inproceedings{dong,
    author    = "D. Xue and A. Gusrialdi and S. Hirche",
    title     = "A Distributed Strategy for Near-Optimal Network Topology Design",
    booktitle = "Proceedings of the 21st International Symposium on Mathematical Theory of Networks and Systems",
    year      = "2014",
    pages      = "7-14"
}

\end{document}